\newtheorem{theorem}{Theorem}[section]
\newtheorem{lemma}[theorem]{Lemma}
\newtheorem{corollary}[theorem]{Corollary}
\theoremstyle{definition}
\newtheorem{definition}[theorem]{Definition}
\newtheorem{example}[theorem]{Example}
\newtheorem{question}[theorem]{Question}
\theoremstyle{remark}
\newtheorem{remark}[theorem]{Remark}
\begin{document}
\setcounter{page}{1}

\title{On $a$-locally Closed Sets}

\author{B. İZCİ $^{\rm a}$, M. ÖZKOÇ $^{\rm b,\ast}$}

\address{$^{1}$Department of Mathematics, Graduate School of Natural and Applied Sciences, Muğla Sıtkı Koçman University, 48000,  Menteşe-Muğla, Turkey.}
\email{bilgeizci@posta.mu.edu.tr \& bilgeizci@hotmail.com}

\address{$^{2}$Department of Mathematics, Faculty of Sciences, Muğla Sıtkı Koçman University, 48000, Menteşe-Muğla, Turkey.}
 \email{murad.ozkoc@mu.edu.tr \& murad.ozkoc@gmail.com}


\subjclass[2010]{54A05, 54C10, 54D05, 54F65}

\keywords{$a$-locally closed, $a$-locally open, $a$-dense, $a$-submaximal, $a$-separated}

\date{$^{*}$Corresponding author}

\begin{abstract}
The aim of this paper is to introduce the notion of $a$-locally closed set by utilizing $a$-open sets defined by Ekici and to study some properties of this new notion. Also, some characterizations and many fundamental results regarding this new concept are obtained. Moreover, the relationships between the concepts defined within the scope of this study and some other types of local closed sets in the literature have been revealed. 
\end{abstract} 
\maketitle

\section{Introduction}
The elements of a topology defined as a family of sets consisting of some subsets of a non-empty set $X$ and closed under finite intersection and any union operation are called open sets. The concept of open set has an important place in general topology and is one of the focal points of research for many mathematicians all over the world. The study of different versions of continuity, separation axioms, compactness, connectedness and other concepts defined with the help of special and general forms of the open set concept are important topics of study in general topology. Starting in 1963 with Levine's introduction of the notion of semiopen set, the process continued with Njastad's study of $\alpha$-open set in 1965 and Ekici's study of $e$-open set in 2008. These works of Levine, Njastad and Ekici inspired the work done today and since then, different types of open sets have been intensively studied. 

To generalize different ideas in topology, many mathematicians have focused on various forms of open sets such as $\alpha$-open set, semi-open set, pre-open set, $b$-open set, $\beta$-open set, $e$-open set and $e^*$-open set. The weak and strong forms of these concepts have been studied by many researchers. These studies have evolved over time into the concept of local closed set and some forms of this concept have been studied over time. Some of these include the intersection of an open set and a closed set.

\section{Preliminaries}

Throughout this paper, unless otherwise stated the terms $X$ and $Y$ refer to topological spaces on which no separation axioms are imposed. For a subset $A$ of $X,$ $cl(A)$ and $int(A)$ stand for the closure of $A$ and the interior of $A$ in $X$, respectively.  $O(X,x)$ stands for the family of all open subsets of $X$ that contain $x$. A subset $A$ is said to be regular open (resp. regular closed) if $A = int(cl(A))$ $($resp. $A=cl(int(A))).$ The $\delta$-interior of a subset $A$ of $X$ is the union of all regular open sets of $X$ contained in $A$ and is denoted by $\delta \text{-}int(A)$. The subset $A$ of a space $X$ is called $\delta$-open if $A=\delta\text{-}int(A)$, i.e., a set is $\delta \text{-}$open if it is the union of some regular open sets.
\\
\begin{definition} 
A subset $A$ of a space $X$ is called:\\
   $a)$ semi-open \cite{11} if $A \subseteq cl(int(A))$;
   \\
   $b)$ $\alpha$-open \cite{14} if $A \subseteq int(cl(int(A)))$;
   \\
   $c)$ $a$-open \cite{6} if $A \subseteq int(cl(\delta\text{-}int(A)))$;
   \\
   $d)$ $b$-open \cite{3} if $A \subseteq cl(int(A))\cup int(cl(A))$;
   \\
   $e)$ $e$-open \cite{8} if $A\subseteq cl(\delta\text{-}int(A))\cup int(\delta\text{-}cl(A))$;
   \\
   $f)$ feebly open \cite{13} if there exists an open set $U$ such that $ U \subseteq A \subseteq scl(U),$ where $scl(U)$ denotes the semi-closure of $U$.
\end{definition}

The family of all semiopen (resp. $\alpha$-open, $a$-open, $b$-open, $e$-open, feebly open) sets in $X$ is denoted by $SO(X)$  (resp. $\alpha O(X), aO(X), BO(X), eO(X), FO(X)).$
The complement of a semi-open (resp. $\alpha$-open, $a$-open, $b$-open, $e$-open, feebly open) set is said to be semi-closed (resp. $\alpha$-closed, $a$-closed, $b$-closed, $e$-closed, feebly closed). The family of all semiclosed (resp. $\alpha$-closed, $a$-closed, $b$-closed, $e$-closed, feebly closed) sets in $X$ is denoted by $SC(X)$  (resp. $\alpha C(X),$ $aC(X),$ $BC(X),$ $eC(X),$ $FC(X)).$

 \begin{definition}
    The semi-closure (resp. $a$-closure) of a subset $A$ of a space is the smallest semi-closed (resp. $a$-closed) set containing $A$ and denoted by $scl(A)$ (resp. $a\text{-}cl(A)).$ Dually, the semi-interior (resp. $a$-interior) of a subset $A$ of a space $X$ is the largest semi-open (resp. $a$-open) set contained in $A$ and denoted by $sint(A)$ (resp. $a\text{-}int(A)).$
\end{definition}

\begin{definition}
A subset $A$ of a space $(X,\tau)$ is called: 
\\
$a)$ locally closed \cite{1} if $A=U \cap V,$ where $U$ is open and $V$ is closed in $X;$
\\
$b)$ $\alpha $-locally closed \cite{10} if $A=U \cap V,$ where $U$ is $\alpha$-open and $V$ is $\alpha$-closed in $X;$
\\
$c)$ $b$-locally closed \cite{10} if $A=U \cap V,$ where $U$ is $b$-open and $V$ is $b$-closed in $X;$
\\
$d)$ $e$-locally closed \cite{101} if $A=U \cap V,$ where $U$ is $e$-open and $V$ is $e$-closed in $X;$
\\
$e)$ feebly locally closed \cite{13} if $A=U \cap V,$ where $U$ is feebly open and $V$ is feebly closed in $X.$
\end{definition}

\begin{lemma} \cite{6} \label{aopen}
 Let $(X,\tau)$ be a topological space. Then, the following hold:
\\
$a)$ $\emptyset,X\in aO(X),$ \\
$b)$ If $A,B\in aO(X),$ then $A\cap B\in aO(X),$ \\
$c)$ If $\mathcal{A}\subseteq aO(X),$ then $\bigcup \mathcal{A}\in aO(X).$
\end{lemma}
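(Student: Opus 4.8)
The plan is to verify the three closure properties separately, treating $(a)$ and $(c)$ as warm-ups and concentrating the real effort on $(b)$. Throughout I will use the definition that $A\in aO(X)$ means $A\subseteq int(cl(\delta\text{-}int(A)))$, together with three standard facts about the $\delta$-interior: it is monotone and satisfies $\delta\text{-}int(A)\subseteq A$; every $\delta$-open set is open, being a union of regular open sets; and the $\delta$-open sets form a topology, so that $\delta\text{-}int(A)\cap\delta\text{-}int(B)$ is again $\delta$-open and hence contained in $\delta\text{-}int(A\cap B)$.

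For $(a)$, since $\delta\text{-}int(\emptyset)=\emptyset$ and $\delta\text{-}int(X)=X$ (as $X$ is regular open), both containments $\emptyset\subseteq int(cl(\emptyset))$ and $X\subseteq int(cl(X))$ hold trivially. For $(c)$, I would fix a family $\mathcal{A}\subseteq aO(X)$ and put $S=\bigcup\mathcal{A}$. For each $A\in\mathcal{A}$, monotonicity of $\delta\text{-}int$, $cl$ and $int$ gives $A\subseteq int(cl(\delta\text{-}int(A)))\subseteq int(cl(\delta\text{-}int(S)))$; taking the union over $A\in\mathcal{A}$ yields $S\subseteq int(cl(\delta\text{-}int(S)))$, so $S\in aO(X)$.

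The heart of the argument — and the step I expect to be the main obstacle — is $(b)$, because neither $cl$ nor $\delta\text{-}int$ distributes over intersection. My plan is to isolate the elementary lemma that for an open set $G$ and an arbitrary set $S$ one has $G\cap cl(S)\subseteq cl(G\cap S)$, and to apply it twice. Writing $P=\delta\text{-}int(A)$ and $Q=\delta\text{-}int(B)$ (both open), I first use that interior distributes over finite intersections to obtain
\[
A\cap B\;\subseteq\;int(cl(P))\cap int(cl(Q))\;=\;int\big(cl(P)\cap cl(Q)\big)\;=:\;W.
\]
Since $W$ is open with $W\subseteq cl(P)$, the lemma gives $W=W\cap cl(P)\subseteq cl(W\cap P)$; then $W\cap P$ is open and contained in $cl(Q)$, so a second application gives $W\cap P\subseteq cl(W\cap P\cap Q)\subseteq cl(P\cap Q)$. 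Combining these and using idempotence of $cl$ yields $W\subseteq cl(P\cap Q)$, and since $W$ is open this upgrades to $W\subseteq int(cl(P\cap Q))$.

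Finally, because $P\cap Q$ is $\delta$-open and contained in $A\cap B$, we have $P\cap Q\subseteq\delta\text{-}int(A\cap B)$, whence $int(cl(P\cap Q))\subseteq int(cl(\delta\text{-}int(A\cap B)))$. Chaining all the inclusions gives $A\cap B\subseteq int(cl(\delta\text{-}int(A\cap B)))$, i.e.\ $A\cap B\in aO(X)$. The only genuinely delicate point is the double application of the open-set/closure lemma, where one must keep careful track of which auxiliary set is open at each stage so that each invocation of $G\cap cl(S)\subseteq cl(G\cap S)$ is legitimate.
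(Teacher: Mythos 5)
Your proof is correct. Note that the paper does not prove this lemma at all---it is imported from Ekici's paper \cite{6} as a known result---so there is no in-paper argument to compare against. Your treatment of $(a)$ and $(c)$ is routine and fine, and your handling of $(b)$ is the standard argument: the double application of the fact that $G\cap cl(S)\subseteq cl(G\cap S)$ for open $G$, applied first with $G=W=int(cl(P)\cap cl(Q))$ and then with $G=W\cap P$, is exactly the right tool, and the final step $P\cap Q\subseteq \delta\text{-}int(A\cap B)$ is justified because the intersection of two regular open sets is regular open, so $P\cap Q$ is $\delta$-open and contained in $A\cap B$. Every invocation of the closure lemma is applied to a set that is genuinely open at that stage, so the argument is complete.
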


\begin{corollary}\label{aopen1}
 Let $(X,\tau)$ be a topological space. Then, the following hold:
 \\
$a)$ $\emptyset,X\in aC(X),$ \\
$b)$ If $A,B\in aC(X),$ then $A\cup B\in aC(X),$ \\
$c)$ If $\mathcal{A}\subseteq aC(X),$ then $\bigcap \mathcal{A}\in aC(X).$
\end{corollary}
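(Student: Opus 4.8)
The plan is to derive every assertion by dualizing the corresponding part of Lemma~\ref{aopen} through complementation, since by definition a set is $a$-closed exactly when its complement is $a$-open. Throughout I would abbreviate $A^c = X \setminus A$ and repeatedly invoke the De Morgan identities $(A \cap B)^c = A^c \cup B^c$ and $\left(\bigcup \mathcal{A}\right)^c = \bigcap \{A^c : A \in \mathcal{A}\}$. The whole corollary is then a mechanical transfer of the lattice closure properties of $aO(X)$ to $aC(X)$.

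For part $(a)$ I would observe that $\emptyset^c = X$ and $X^c = \emptyset$ both belong to $aO(X)$ by Lemma~\ref{aopen}$(a)$, so their complements $\emptyset$ and $X$ lie in $aC(X)$. For part $(b)$, given $A, B \in aC(X)$, I would note that $A^c, B^c \in aO(X)$, whence Lemma~\ref{aopen}$(b)$ gives $A^c \cap B^c \in aO(X)$; since $A^c \cap B^c = (A \cup B)^c$, the complement of $A \cup B$ is $a$-open, i.e. $A \cup B \in aC(X)$.

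For part $(c)$, given $\mathcal{A} \subseteq aC(X)$, I would form the family $\mathcal{B} = \{A^c : A \in \mathcal{A}\}$ of complements. Each element of $\mathcal{B}$ is $a$-open, so $\mathcal{B} \subseteq aO(X)$, and Lemma~\ref{aopen}$(c)$ yields $\bigcup \mathcal{B} \in aO(X)$. De Morgan then gives $\left(\bigcap \mathcal{A}\right)^c = \bigcup \mathcal{B} \in aO(X)$, so $\bigcap \mathcal{A} \in aC(X)$.

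There is no genuine obstacle in this argument; it is pure duality. The only point that warrants a little care is the bookkeeping in part $(c)$: one must correctly match the index family $\mathcal{B}$ of complements to $\mathcal{A}$ and apply the arbitrary-union form of De Morgan's law, so that the arbitrary-union closure of $aO(X)$ translates into the arbitrary-intersection closure of $aC(X)$.
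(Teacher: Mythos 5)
Your proof is correct and is exactly the duality argument the paper intends: the corollary is stated without proof as an immediate consequence of Lemma~\ref{aopen}, obtained by passing to complements and applying De Morgan's laws, which is precisely what you carry out. No gaps; the bookkeeping in part $(c)$ is handled properly.
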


\begin{definition}
A subset $A$ of a space $X$ is called dense if $cl(A)=X$. A space $X$ is called submaximal if every dense subset of $X$ is open in $X.$
 \end{definition}
 
\begin{definition} \cite{101}
A subset $A$ of a space $X$ is called dense if $e$-$cl(A)=X$. A space $X$ is called $e$-submaximal if every $e$-dense subset of $X$ is $e$-open in $X.$
\end{definition}

\section{$a$-locally closed sets}

\begin{definition}
A subset $A$ of a topological space $X$ is called $a$-locally closed if it is the intersection of an $a$-open and an $a$-closed set. The complement of an $a$-locally closed set is called $a$-locally open. The family of all $a$-locally closed sets (resp. $a$-locally open) in a space $X$ will be denoted by $aLC(X) \ (\text{resp. } aLO(X)).$ 
\end{definition}
	
	\begin{theorem}
	Let $X$ be a topological space. Then, the following hold.\\
	$a)$ $aO(X)\subseteq aLC(X),$\\
	$b)$ $aC(X)\subseteq aLC(X).$
	\end{theorem}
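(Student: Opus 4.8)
The plan is to prove both inclusions by exhibiting, for an arbitrary member of the left-hand family, an explicit representation as an intersection of an $a$-open set and an $a$-closed set, since that is precisely the defining condition for membership in $aLC(X)$. The entire argument will rest on Lemma \ref{aopen} and Corollary \ref{aopen1}, which guarantee that $\emptyset$ and $X$ belong to both $aO(X)$ and $aC(X)$; the universal set $X$ is the natural ``neutral element'' for intersection and will be the key tool in both parts.

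For part $a)$, I would take an arbitrary set $A \in aO(X)$ and write $A = A \cap X$. Here $A$ is $a$-open by hypothesis, and $X \in aC(X)$ by Corollary \ref{aopen1}$(a)$. Thus $A$ is expressed as the intersection of an $a$-open set and an $a$-closed set, so $A \in aLC(X)$ by definition. Since $A$ was arbitrary, $aO(X) \subseteq aLC(X)$.

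For part $b)$, the argument is dual: I would take an arbitrary $A \in aC(X)$ and write $A = X \cap A$, where $X \in aO(X)$ by Lemma \ref{aopen}$(a)$ and $A$ is $a$-closed by hypothesis. This again matches the definition of an $a$-locally closed set, giving $A \in aLC(X)$ and hence $aC(X) \subseteq aLC(X)$.

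I do not anticipate any genuine obstacle here, as the statement is an immediate consequence of unwinding the definition of $aLC(X)$ together with the fact that $X$ is simultaneously $a$-open and $a$-closed. The only point requiring minor care is citing the correct clause of the correct lemma for each part ($X \in aC(X)$ from Corollary \ref{aopen1} in part $a)$, and $X \in aO(X)$ from Lemma \ref{aopen} in part $b)$), so that each exhibited factorization is certifiably of the required form.
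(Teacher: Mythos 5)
Your proof is correct and is essentially identical to the paper's: for $a)$ the paper also writes $A=U\cap V$ with $U:=A\in aO(X)$ and $V:=X\in aC(X)$, and for $b)$ it takes $U:=X\in aO(X)$ and $V:=A\in aC(X)$. Nothing further is needed.
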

	\begin{proof}
	$(a)$  Let $A\in aO(X).$ \\
$\left.\begin{array}{rr}  
A \in aO(X) \\ (U:=A)(V:=X) \end{array}\right\}\Rightarrow (U \in aO(X))(V \in aC(X))(A=U \cap V)$
\\
$\begin{array}{l}\Rightarrow A\in aLC(X).\end{array}$
\\

$(b)$ Let $A\in aC(X).$ \\
	 $\left.\begin{array}{rr} A \in aC(X) \\ (U:=X)(V:=A) \end{array}\right\}\Rightarrow (U \in aO(X))(V \in aC(X))(A=U\cap V)$
  \\ 
  $\begin{array}{l}\Rightarrow A\in aLC(X).\end{array}$	
	 \end{proof}

\begin{theorem}
   Let $X$ be a topological space. Then, $aLC(X)\subseteq eLC(X).$
    \end{theorem}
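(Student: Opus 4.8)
The plan is to reduce this set-level inclusion to two building-block inclusions at the level of the open/closed families, namely $aO(X)\subseteq eO(X)$ and $aC(X)\subseteq eC(X)$. This is the natural strategy because both $aLC(X)$ and $eLC(X)$ are defined as intersections of an open-type set with a closed-type set, so once the generating families are nested the locally closed families will be nested automatically.

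First I would prove $aO(X)\subseteq eO(X)$ directly from the definitions. If $A\in aO(X)$, then by definition $A\subseteq int(cl(\delta\text{-}int(A)))$. Applying the elementary containment $int(B)\subseteq B$ with $B=cl(\delta\text{-}int(A))$, I obtain $A\subseteq cl(\delta\text{-}int(A))$, and hence
\[
A\subseteq cl(\delta\text{-}int(A))\cup int(\delta\text{-}cl(A)),
\]
which is precisely the defining condition for $A$ to be $e$-open. Thus $A\in eO(X)$.

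Next, passing to complements gives $aC(X)\subseteq eC(X)$: if $V\in aC(X)$, then $X\setminus V\in aO(X)\subseteq eO(X)$, so $V\in eC(X)$. Finally, for an arbitrary $A\in aLC(X)$, write $A=U\cap V$ with $U\in aO(X)$ and $V\in aC(X)$. By the two inclusions just established, $U\in eO(X)$ and $V\in eC(X)$, and therefore $A=U\cap V\in eLC(X)$, completing the argument.

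I do not expect a genuine obstacle here; the entire proof rests on the single elementary fact that $int(cl(\cdot))\subseteq cl(\cdot)$, which collapses the stronger $a$-open condition into the weaker $e$-open condition. The only point requiring slight care is the complementation step, where one must confirm that the definition of $e$-closed is exactly the complement of $e$-open so that $aC(X)\subseteq eC(X)$ follows cleanly.
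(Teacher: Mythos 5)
Your proposal is correct and follows the same route as the paper: write $A=U\cap V$ with $U\in aO(X)$, $V\in aC(X)$ and invoke the inclusions $aO(X)\subseteq eO(X)$ and $aC(X)\subseteq eC(X)$. The only difference is that you also verify these two inclusions (via $int(cl(\delta\text{-}int(A)))\subseteq cl(\delta\text{-}int(A))$ and complementation), whereas the paper simply cites them; your verification is sound.
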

    
\begin{proof}
Let $A\in aLC(X).$
\\
$\left.\begin{array}{rr} A\in aLC(X)\Rightarrow (\exists U\in aO(X))(\exists V\in aC(X))(A=U\cap V)\\ (aO(X)\subseteq eO(X))(aC(X)\subseteq eC(X))\end{array}\right\}\Rightarrow$\\
$\begin{array}{l} \Rightarrow (\exists U\in eO(X))(\exists V\in eC(X))(A=U\cap V)\end{array}$
\\
$\begin{array}{l} \Rightarrow A\in eLC(X).
\end{array}$
\end{proof}

	\begin{lemma}\label{lemma2}
	Let $X$ be a topological space. Then, the following hold.\\
    $a)$ $aO(X) \subseteq FO(X),$\\
    $b)$ $aC(X) \subseteq FC(X).$
    \end{lemma}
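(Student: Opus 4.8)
The plan is to prove part $(a)$ directly by exhibiting the open set that witnesses feeble openness, and then to deduce part $(b)$ from $(a)$ by complementation. Recall that feeble openness of $A$ asks for an open $U$ with $U \subseteq A \subseteq scl(U)$, so the whole game is to name the right $U$ and check the two inclusions.

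For $(a)$, let $A \in aO(X)$, so by definition $A \subseteq int(cl(\delta\text{-}int(A)))$. The natural candidate for the witnessing open set is $U := \delta\text{-}int(A)$. First I would observe that $U$ is genuinely open: since $\delta\text{-}int(A)$ is by definition the union of all regular open sets contained in $A$ and every regular open set is open, $U$ is open, and moreover $U = \delta\text{-}int(A) \subseteq A$ always holds, giving the lower inclusion. It then remains to verify $A \subseteq scl(U)$. Here I would invoke the standard identity $scl(B) = B \cup int(cl(B))$ for the semi-closure of an arbitrary set $B$; applying it to $U$ gives $scl(U) = U \cup int(cl(U))$. Combining the $a$-openness hypothesis with this identity yields $A \subseteq int(cl(\delta\text{-}int(A))) = int(cl(U)) \subseteq U \cup int(cl(U)) = scl(U)$. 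Thus $U \subseteq A \subseteq scl(U)$ with $U$ open, which is exactly the condition for $A$ to be feebly open.

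For $(b)$, I would argue by taking complements. If $A \in aC(X)$, then $X \setminus A \in aO(X)$, so by part $(a)$ the set $X \setminus A$ is feebly open; since by definition the complement of a feebly open set is feebly closed, $A$ is feebly closed, i.e. $A \in FC(X)$.

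The step I expect to be the crux is the verification of $A \subseteq scl(U)$, and in particular the use of the semi-closure formula $scl(U) = U \cup int(cl(U))$; once that identity is in hand the inclusion is immediate from the definition of $a$-openness, since $int(cl(\delta\text{-}int(A)))$ is precisely the $int(cl(\cdot))$ piece of $scl(U)$. The only other point requiring care is confirming that $\delta\text{-}int(A)$ is open, which is immediate from its description as a union of regular open sets.
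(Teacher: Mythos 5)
Your proposal is correct and matches the paper's own argument essentially step for step: the paper also takes $U:=\delta\text{-}int(A)$ as the witnessing open set and verifies $U \subseteq A \subseteq int(cl(U)) \subseteq U \cup int(cl(U)) = scl(U)$, then obtains $(b)$ from $(a)$ by complementation. No gaps.
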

	
\begin{proof}
    $a)$ Let $A\in aO(X).$\\
    $\left.\begin{array}{rr}A\in aO(X)\Rightarrow A\subseteq int(cl(\delta\text{-}int(A))) \\ U:=\delta\text{-}int(A)\end{array}\right\}\Rightarrow $
    \\
    $\begin{array}{l}\Rightarrow (U\in \tau)(U\subseteq A\subseteq int(cl(U))\subseteq U\cup int(cl(U))=scl(U))\end{array}$
    \\
    $\begin{array}{l}\Rightarrow A\in FO(X).\end{array}$
    
\hspace{1cm}   $b)$ Let $A\in aC(X).$\\
	$\begin{array}{l}A\in aC(X)\Rightarrow X\setminus A \in aO(X)\overset{(a)}{\Rightarrow} X \setminus A \in FO(X)\Rightarrow A\in FC(X).\end{array}$
	\end{proof}

\begin{theorem}
Let $X$ be a topological space. Then, the following statements hold:
\\
$a)$ $aLC(X)\subseteq FLC(X),$
\\
$b)$ $aLC(X)\subseteq \alpha C(X).$
\end{theorem}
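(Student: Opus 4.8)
The plan is to handle the two inclusions separately. In each case I would start from an arbitrary $A\in aLC(X)$ and use the definition of $a$-locally closed to fix a decomposition $A=U\cap V$ with $U\in aO(X)$ and $V\in aC(X)$, and then push the two factors into the relevant families.

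For part $(a)$ the argument should be immediate from Lemma \ref{lemma2}. Indeed, Lemma \ref{lemma2}$(a)$ turns the $a$-open factor $U$ into a feebly open set and Lemma \ref{lemma2}$(b)$ turns the $a$-closed factor $V$ into a feebly closed set; hence $A=U\cap V$ is an intersection of a feebly open set and a feebly closed set, which is precisely the definition of a feebly locally closed set, so $A\in FLC(X)$. I expect no real difficulty here, since the only nontrivial input, Lemma \ref{lemma2}, is already available and the rest is just unwinding definitions.

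For part $(b)$ I would first record the inclusions $aO(X)\subseteq \alpha O(X)$ and $aC(X)\subseteq \alpha C(X)$. The open version follows from $\delta\text{-}int(A)\subseteq int(A)$: if $A\in aO(X)$ then $A\subseteq int(cl(\delta\text{-}int(A)))\subseteq int(cl(int(A)))$, so $A\in \alpha O(X)$, and the closed version is obtained by complementation. Feeding the decomposition $A=U\cap V$ through these inclusions exhibits $A$ as the intersection of the $\alpha$-open set $U$ and the $\alpha$-closed set $V$.

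The main obstacle in $(b)$ is the passage from this decomposition to the stated conclusion $A\in \alpha C(X)$, that is, to the single condition $cl(int(cl(A)))\subseteq A$: the decomposition above only presents $A$ as an intersection of an $\alpha$-open set and an $\alpha$-closed set, and collapsing this to genuine $\alpha$-closedness of $A$ is not formal, so I would expect to have to invoke the stronger structure of $a$-open sets rather than merely their $\alpha$-openness. Concretely, I would try to bound $cl(int(cl(A)))$ by the $a$-closed factor $V$, exploiting that for the $a$-open factor $U$ the set $int(cl(\delta\text{-}int(U)))$ is regular open and serves as a $\delta$-open hull of $U$. This final collapsing step is where essentially all of the work sits and is the point I would scrutinise most carefully, since it is here that the distinction between $a$-openness and ordinary $\alpha$-openness has to be used in an essential way.
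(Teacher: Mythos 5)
Your part $(a)$ is correct and is exactly the paper's proof: fix $A=U\cap V$ with $U\in aO(X)$, $V\in aC(X)$, and apply Lemma \ref{lemma2} to both factors to get $A\in FLC(X)$. For part $(b)$, the portion of your argument that establishes $A\in \alpha LC(X)$ --- via $aO(X)\subseteq \alpha O(X)$ and $aC(X)\subseteq \alpha C(X)$, which you justify correctly from $\delta\text{-}int(A)\subseteq int(A)$ --- is also precisely the paper's proof: note that the paper's own proof of $(b)$ ends with the conclusion $A\in \alpha LC(X)$, not $A\in \alpha C(X)$. The ``$\alpha C(X)$'' in the theorem statement is a misprint for ``$\alpha LC(X)$'', as is confirmed both by the implication diagram in the subsequent Remark (which records $a$-locally closed $\Rightarrow$ $\alpha$-locally closed) and by the first example, where $\{a\}$ is $\alpha$-locally closed but not $a$-locally closed.

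The ``final collapsing step'' you plan --- bounding $cl(int(cl(A)))$ by the $a$-closed factor so as to obtain genuine $\alpha$-closedness of $A$ --- is therefore not merely nontrivial but impossible: the literal inclusion $aLC(X)\subseteq \alpha C(X)$ is false. Since $aO(X)\subseteq aLC(X)$, any $a$-open set that is not $\alpha$-closed is a counterexample. Take $X=\{a,b,c\}$ with $\tau=\{\emptyset,\{a\},\{b\},\{a,b\},X\}$: here $\{a\}$ and $\{b\}$ are regular open, so $\delta\text{-}int(\{a,b\})=\{a,b\}$ and $int(cl(\delta\text{-}int(\{a,b\})))=int(X)=X\supseteq \{a,b\}$, whence $\{a,b\}\in aO(X)\subseteq aLC(X)$; but $cl(int(cl(\{a,b\})))=X\not\subseteq \{a,b\}$, so $\{a,b\}\notin \alpha C(X)$. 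You were right to sense that the distinction between $a$-openness and $\alpha$-openness would have to enter ``in an essential way'' --- it cannot, because no proof exists. The correct resolution is to stop at $A\in \alpha LC(X)$, where your argument is already complete and coincides with the paper's, and to flag the statement's ``$\alpha C(X)$'' as a typo rather than attempt the stronger claim.
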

    
\begin{proof}
$a)$ Let $A\in aLC(X).$
   \\
   $\left.\begin{array}{rr} A\in aLC(X) \Rightarrow (\exists U\in aO(X))(\exists V\in aC(X))(A=U\cap V) \\   \text{Lemma } \ref{lemma2} \end{array}\right\}\Rightarrow $
\\
$\begin{array}{l}\Rightarrow (\exists U\in FO(X))(\exists V\in FC(X))(A=U\cap V)\end{array}$
\\
$\begin{array}{l}
\Rightarrow  A\in FLC(X).
\end{array}$

$b)$ Let $A\in aLC(X).$ \\
$\left.\begin{array}{rr}
   A\in aLC(X) \Rightarrow  (\exists U\in aO(X))(\exists V\in aC(X))(A=U\cap V) \\  (aO(X)\subseteq \alpha O(X))(aC(X)\subseteq \alpha C(X))
   \end{array}\right\}\Rightarrow$
   \\
  $\begin{array}{l} \Rightarrow (\exists U\in \alpha O(X))(\exists V\in \alpha C(X))(A=U\cap V)\end{array}$
    \\
  $\begin{array}{l} \Rightarrow A\in \alpha LC(X).
  \end{array}$
    \end{proof}

\begin{remark}
	We have the following diagram from the previous definitions and results given above.
\begin{equation*}
\begin{tikzcd}
 & \text{feebly locally closed} \arrow[r, Rightarrow] \arrow[rd, Rightarrow]                      & b\text{-locally closed}\\
\text{locally closed}  \arrow[rd, Rightarrow] \arrow[ru, Rightarrow] & a\text{-locally closed} \arrow[r, Rightarrow] \arrow[u, Rightarrow] \arrow[d, Rightarrow]                     & e\text{-locally closed} \\
\text{ } & \alpha\text{-locally closed}  \arrow[ru, Rightarrow] \arrow[r, Rightarrow] & b\text{-locally closed}
\end{tikzcd}
\end{equation*}
\end{remark}

The converses given above implications need not to be true as shown by the following examples.

\begin{example}
Let $X=\{a,b,c,d\}$ and $\tau=\{\emptyset,X,\{a\},\{b\},\{a,b\},\{a,c,d\}\}$. Simple calculations show that $eLC(X)=\alpha LC(X)=FLC(X)=2^X$ and $aLC(X)=\{\emptyset,X,\{b\},\{a,c,d\}\}$. Then, it is clear that the set $\{a\}$ is feebly locally closed and so $e$-locally closed. Also, it is $\alpha$-locally closed but not $a$-locally closed.
    \end{example}

 \begin{example}
    Let $X=\{a,b,c,d\}$ and $\tau=\{\emptyset, X,\{a\},\{b\},\{a,b\},\{a,c\},\{a,b,c\},\\ \{a,b,d\}\}$. Simple calculations show that $LC(X)=2^X$, $aLC(X)=\{\emptyset, X, \{b\}, \{d\}, \\ \{a,c\}, \{b,d\}, \{a,b,c\}, \{a,c,d\}\}$. Then, it is clear that the set $\{a\}$ is locally closed but not $a$-locally closed.
  \end{example}

\begin{example}
Let $X=\{a,b,c,d\}$ ve $\tau=\{\emptyset, X,\{a\},\{b\},\{a,b\}\}$. Simple calculations show that $LC(X)=\{\emptyset, X,\{a\},\{b\},\{a,b\},\{c,d\},\{a,c,d\},\{b,c,d\}\}$ and $aLC(X)=2^X.$ Then, it is clear that the set $\{c\}$ is an $a$-locally closed set but it is not locally closed.
\end{example}

\begin{question}
Are the notions $b$-locally closedness and $e$-locally closedness independent? 
\end{question}

\begin{theorem}
Let $A$ and $B$ be two subsets of a space $X.$ 
If A and B are $a$-locally closed, then so are their intersections.
	      \end{theorem}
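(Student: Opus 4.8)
The plan is to unpack the definition of $a$-local closedness for each of $A$ and $B$, rearrange the resulting fourfold intersection using commutativity and associativity, and then regroup the $a$-open factors together and the $a$-closed factors together. Concretely, I would begin by writing $A = U_1 \cap V_1$ with $U_1 \in aO(X)$ and $V_1 \in aC(X)$, and similarly $B = U_2 \cap V_2$ with $U_2 \in aO(X)$ and $V_2 \in aC(X)$; this is exactly the data that membership in $aLC(X)$ supplies.

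Next I would form $A \cap B = (U_1 \cap V_1) \cap (U_2 \cap V_2)$ and reorder the factors as $(U_1 \cap U_2) \cap (V_1 \cap V_2)$. The purpose of the regrouping is to isolate a single $a$-open piece and a single $a$-closed piece, so that the result again matches the defining form of an $a$-locally closed set.

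The two facts that make the regrouping succeed are the closure properties of the families $aO(X)$ and $aC(X)$ under finite intersection. For the $a$-open factor, Lemma \ref{aopen}$(b)$ yields $U_1 \cap U_2 \in aO(X)$. For the $a$-closed factor, Corollary \ref{aopen1}$(c)$, applied to the two-element family $\{V_1, V_2\}$, yields $V_1 \cap V_2 \in aC(X)$. Setting $U := U_1 \cap U_2 \in aO(X)$ and $V := V_1 \cap V_2 \in aC(X)$, I would then have $A \cap B = U \cap V$ and conclude immediately that $A \cap B \in aLC(X)$.

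There is essentially no genuine obstacle here: the argument is purely a set-theoretic rearrangement backed by the lattice-closure properties already recorded earlier. The only point deserving a moment's care is invoking the correct direction of closure, namely that $aC(X)$ is stable under \emph{intersection} (Corollary \ref{aopen1}$(c)$) while $aO(X)$ is stable under \emph{finite} intersection (Lemma \ref{aopen}$(b)$), even though $aO(X)$ enjoys arbitrary unions.
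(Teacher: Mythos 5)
Your proof is correct and follows essentially the same route as the paper: decompose $A=U_1\cap V_1$ and $B=U_2\cap V_2$, regroup $A\cap B=(U_1\cap U_2)\cap(V_1\cap V_2)$, and invoke the closure of $aO(X)$ and $aC(X)$ under finite intersection. Your explicit citation of Corollary \ref{aopen1} for the $a$-closed factor is in fact slightly more precise than the paper, which cites only Lemma \ref{aopen}.
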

	      \begin{proof}
	      Let $A,B \in aLC(X).$ 
	      \\
	      $\left.\begin{array}{rr} A \in aLC(X) \Rightarrow (\exists U_1 \in aO(X))(\exists V_1 \in aC(X))(A= U_1 \cap V_1) \\ B \in aLC(X) \Rightarrow (\exists U_2 \in aO(X))(\exists V_2 \in aC(X))(B= U_2 \cap V_2) \end{array}\right\}\overset{\text{Lemma } \ref{aopen} }{\Rightarrow}$ 
	      \\
	      $\begin{array}{l}
	      \Rightarrow (U_1 \cap U_2 \in aO(X))(V_1 \cap V_2 \in aC(X))(A \cap B = (U_1 \cap U_2) \cap (V_1 \cap V_2))
	      \end{array}$
	      \\
	      $\begin{array}{l}
	      \Rightarrow A \cap B \in aLC(X).
	      \end{array}$
	      \end{proof}

\begin{lemma}\label{aek}
Let $A$ and $B$ be two subsets of a space $X.$ Then, the following hold:\\
$a)$ If $A\in aO(X)$ and $B\in eO(X),$ then $A\cap B\in eO(X),$ \\
$b)$ If $A\in aC(X)$ and $B\in eC(X),$ then $A\cap B\in eC(X).$ 
\end{lemma}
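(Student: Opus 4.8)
The plan is to prove $(a)$ in earnest — this is where the hypothesis that $A$ is $a$-open (and not merely $e$-open) does real work — and then to obtain $(b)$ cheaply by duality. Throughout, write $G:=\delta\text{-}int(A)$ and $P:=int(cl(\delta\text{-}int(A)))=int(cl(G))$. From $A\in aO(X)$ I only need the \emph{sandwich} $G\subseteq A\subseteq P$, together with two structural facts: $G$ is $\delta$-open (being a union of regular open sets) and $P$ is regular open (since $int(cl(\cdot))$ is always regular open). Since $B$ is $e$-open, $B\subseteq cl(\delta\text{-}int(B))\cup int(\delta\text{-}cl(B))$, so I would fix $x\in A\cap B$ and split into the case $x\in cl(\delta\text{-}int(B))$ and the case $x\in int(\delta\text{-}cl(B))$, aiming to land in $cl(\delta\text{-}int(A\cap B))$ and $int(\delta\text{-}cl(A\cap B))$ respectively; note $\delta\text{-}int(A\cap B)=G\cap\delta\text{-}int(B)$.

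Before the cases I would record three elementary facts. (P1) The intersection of two regular open sets is regular open; in particular $R\cap P$ is regular open for every regular open $R$, and $int(cl(M))\subseteq R\cap P$ whenever $M\subseteq R\cap P$. (P2) A nonempty open subset of $cl(G)$ must meet $G$. (P3) If $U$ is $\delta$-open then $U\cap\delta\text{-}cl(S)\subseteq\delta\text{-}cl(U\cap S)$; this is the $\delta$-analogue of the standard fact that open sets slide inside the closure, and its proof again rests on (P1). All three are routine and could alternatively be cited from the literature on $\delta$-open sets.

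The first case is the easy one: given an open $O\ni x$, I would intersect with $P$ (legitimate since $x\in A\subseteq P$) to obtain an open neighbourhood $O\cap P\subseteq cl(G)$ of $x$; as $x\in cl(\delta\text{-}int(B))$ this meets $K:=\delta\text{-}int(B)$, and then $O\cap P\cap K$ is a nonempty open subset of $cl(G)$, which by (P2) meets $G$. The resulting point lies in $O\cap(G\cap K)=O\cap\delta\text{-}int(A\cap B)$, so $x\in cl(\delta\text{-}int(A\cap B))$. The second case is the crux and the step I expect to be the main obstacle, precisely because $\delta$-closure is tested against regular open sets while the neighbourhoods furnished by $A\subseteq P$ are only ordinary open. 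Setting $N:=P\cap int(\delta\text{-}cl(B))$ (an open neighbourhood of $x$), I would show $N\subseteq\delta\text{-}cl(G\cap B)\subseteq\delta\text{-}cl(A\cap B)$, whence $x\in int(\delta\text{-}cl(A\cap B))$. For $y\in N$ and a regular open $R\ni y$, put $M:=R\cap P\cap int(\delta\text{-}cl(B))$; since $M\subseteq P\subseteq cl(G)$, (P2) yields a point $z\in M\cap G$, and because $z\in G$ is $\delta$-open and $z\in\delta\text{-}cl(B)$, (P3) gives $z\in\delta\text{-}cl(G\cap B)$. The regular open set $int(cl(M))$ contains $z$ and, by (P1), satisfies $int(cl(M))\subseteq R\cap P\subseteq R$; hence it meets $G\cap B$ inside $R$, proving $y\in\delta\text{-}cl(G\cap B)$. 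Confining the $\delta$-closure witness to the prescribed $R$ via the regular open hull $int(cl(M))$ is the one genuinely delicate point.

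For $(b)$ I would argue by duality rather than imitate the above. Since $aC(X)\subseteq eC(X)$ (already used in the excerpt), $A$ is $e$-closed, and so is $B$. The family $eO(X)$ is closed under arbitrary unions — immediate from the monotonicity of $\delta\text{-}int$ and $\delta\text{-}cl$, since for any family each member $A_i\subseteq cl(\delta\text{-}int(A_i))\cup int(\delta\text{-}cl(A_i))$ is contained in $cl(\delta\text{-}int(\bigcup_j A_j))\cup int(\delta\text{-}cl(\bigcup_j A_j))$ — so by complementation $eC(X)$ is closed under arbitrary intersections. Therefore $A\cap B\in eC(X)$. Thus $(b)$ in fact needs nothing about $A$ beyond its being $e$-closed, but it is natural to state it with the $a$-closed hypothesis because that is the form in which it feeds the later comparison of $aLC(X)$ with $eLC(X)$.
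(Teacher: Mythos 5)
Your proof is correct, but it takes a genuinely different route from the paper's. For part $(a)$ the paper never argues pointwise: it runs a single chain of operator inclusions, starting from $A\cap B\subseteq[int(cl(\delta\text{-}int(A)))\cap int(\delta\text{-}cl(B))]\cup[int(cl(\delta\text{-}int(A)))\cap cl(\delta\text{-}int(B))]$ and repeatedly applying facts such as $U\cap cl(S)\subseteq cl(U\cap S)$ for $U$ open, the interchange of $int$, $cl$ with their $\delta$-analogues on sets of the form $int(cl(\cdot))$, and distributivity of $\delta\text{-}int$ over finite intersections, until it lands on $cl(\delta\text{-}int(A\cap B))\cup int(\delta\text{-}cl(A\cap B))$ (the very last line of the paper's chain writes $\cap$ where $\cup$ is clearly intended). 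Your two cases are exactly the two summands implicit in that chain, and your device of confining the $\delta$-closure witness to $R$ via the regular open hull $int(cl(M))$ is doing precisely the work that the paper's unproved $\delta$-operator identities do; your version is longer but every step is elementary and auditable, whereas the paper's is compact but leans on a stack of identities it does not justify. For part $(b)$ the divergence is more substantive: the paper complements $(a)$, which yields $A\cup B\in eC(X)$ --- the union, not the intersection asserted in the statement --- so the paper's proof of $(b)$ as written does not establish the claim it announces. Your argument (both sets lie in $eC(X)$, and $eC(X)$ is closed under intersections because $eO(X)$ is closed under arbitrary unions) proves the stated intersection form directly, which is also the form actually consumed in the subsequent theorem on $aLC(X)\cap eLC(X)$ intersections.
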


\begin{proof}
$\mathbf{(a)}$ Let $A\in aO(X)$ and $B\in eO(X).$\\
$\left.\begin{array}{rr}  
A \in aO(X) \Rightarrow A \subseteq int(cl(\delta \text{-}int(A)))\\B \in eO(X) \Rightarrow B \subseteq int(\delta \text{-}cl(B)) \cup cl(\delta\text{-}int(B))  \end{array}\right\}{\Rightarrow} $
\\
$\begin{array}{rcl} \Rightarrow A \cap B & \subseteq & [int(cl(\delta \text{-}int(A))) \cap int(\delta \text{-}cl(B))] \cup [int(cl(\delta \text{-}int(A))) \cap cl(\delta \text{-}int(B))] 
\\
\\
& \subseteq & int[cl(\delta \text{-}int(A)) \cap int(\delta \text{-}cl(B))] \cup cl[int(cl(\delta \text{-}int(A))) \cap \delta \text{-}int(B)]
\\
\\
& \subseteq & int(cl[int(\delta \text{-}cl(B)) \cap \delta \text{-}int(A)]) \cup cl[\delta\text{-}int(\delta\text{-}cl(\delta \text{-}int(A))) \cap \delta \text{-}int(\delta \text{-}int(B))]
\\
\\
& \subseteq & int(cl[\delta \text{-}int(\delta \text{-}cl(B)) \cap \delta\text{-} int(\delta \text{-}int(A))]) \cup cl[\delta\text{-}int(\delta\text{-}cl(\delta \text{-}int(A))) \cap \delta \text{-}int(B)]
\\
\\
& \subseteq & int(cl[\delta \text{-}int[\delta \text{-}cl(B) \cap \delta \text{-}int(A)]]) \cup cl(\delta\text{-}int[\delta\text{-}cl(\delta \text{-}int(A) \cap \delta \text{-}int(B))])
\\
\\
& \subseteq & int(cl[\delta \text{-}int[\delta \text{-}cl(\delta \text{-} int(A)\cap B)]]) \cup \delta \text{-} cl(\delta\text{-}int(\delta\text{-}cl(\delta \text{-}int(A \cap B))))
\\
\\
& \subseteq & \delta \text {-}int (\delta\text{-}cl(\delta\text{-}int(\delta\text{-}cl(A \cap B)))) \cup \delta\text{-}cl(\delta\text{-}int(A\cap B))
\\
\\
& = & \delta \text{-}int(\delta \text{-}cl(A \cap B)) \cup cl(\delta \text{-}int(A \cap B))
\\
\\
& = & int(\delta \text{-}cl(A \cap B)) \cap cl(\delta\text{-}int(A \cap B))\end{array}$
\\

 This means $A\cap B\in eO(X).$\\

$b)$ Let $A\in aC(X)$ and $B\in eC(X).$ \\ $\left.\begin{array}{rr}  
A \in aC(X) \Rightarrow X\setminus A \in aO(X) \\ B \in eC(X) \Rightarrow X\setminus B \in eO(X)  \end{array}\right\}\overset{(a)} {\Rightarrow} X \setminus (A \cup B)=(X\setminus A) \cap (X\setminus B)  \in eO(X)
\\
 \Rightarrow A \cup B \in eC(X).$
\end{proof} 
    
    \begin{theorem}
Let $A$ and $B$ be two subsets of a space $X.$ If $A\in aLC(X)$ and $B\in eLC(X),$ then $A\cap B\in eLC(X).$ 
    \end{theorem}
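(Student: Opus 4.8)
The plan is to unfold both hypotheses through the definition of locally closed sets and then regroup the resulting fourfold intersection so that its ``open part'' and its ``closed part'' each fall under one clause of Lemma \ref{aek}. First I would write $A = U_1 \cap V_1$ with $U_1 \in aO(X)$ and $V_1 \in aC(X)$, and $B = U_2 \cap V_2$ with $U_2 \in eO(X)$ and $V_2 \in eC(X)$; this is exactly the data that membership in $aLC(X)$ and in $eLC(X)$ provides.

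Next I would use commutativity and associativity of intersection to rewrite
\[
A \cap B = (U_1 \cap V_1) \cap (U_2 \cap V_2) = (U_1 \cap U_2) \cap (V_1 \cap V_2).
\]
The purpose of this regrouping is that $U_1 \cap U_2$ is now displayed as the intersection of an $a$-open set with an $e$-open set, while $V_1 \cap V_2$ is displayed as the intersection of an $a$-closed set with an $e$-closed set.

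With the factors sorted in this way, I would invoke Lemma \ref{aek} directly: part $(a)$ yields $U_1 \cap U_2 \in eO(X)$, and part $(b)$ yields $V_1 \cap V_2 \in eC(X)$. Since $A \cap B$ is thereby exhibited as the intersection of an $e$-open set with an $e$-closed set, it belongs to $eLC(X)$, which is the claim.

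I do not expect a genuine obstacle, because essentially the whole difficulty has already been absorbed into Lemma \ref{aek}. The one conceptual point worth flagging is that neither $eO(X)$ nor $eC(X)$ is in general closed under finite intersections, so one cannot simply quote a closure property of the $e$-open or $e$-closed families; it is essential that in each of the two intersections one factor is of the stronger $a$-type, which is precisely the cross-compatibility that Lemma \ref{aek} supplies. Consequently the only place requiring care is the bookkeeping in the regrouping step, namely keeping track of which set is $a$-type and which is $e$-type so that the correct clause of the lemma is applied to each half.
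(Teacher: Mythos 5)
Your proposal is correct and follows exactly the same route as the paper: decompose $A=U_1\cap V_1$ and $B=U_2\cap V_2$, regroup as $(U_1\cap U_2)\cap(V_1\cap V_2)$, and apply Lemma \ref{aek} to each factor. Your closing remark that the cross-compatibility of the $a$-type and $e$-type factors is the essential point (rather than any closure property of $eO(X)$ or $eC(X)$ alone) is a fair observation, and nothing further is needed.
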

    
    \begin{proof}
     Let $A\in aLC(X)$ and $B\in eLC(X).$
    \\
    $\left.\begin{array}{rr}  
A \in aLC(X) \Rightarrow (\exists U_1 \in aO(X))(\exists V_1 \in aC(X))(A=U_1 \cap V_1) \\B \in eLC(X) \Rightarrow (\exists U_2\in eO(X))(\exists V_2 \in eC(X))(B= U_2 \cap V_2)  \end{array}\right\}\overset{\text{Lemma } \ref{aek}}{\Rightarrow} $
\\
$\left.\begin{array}{rr}
\Rightarrow (U_1 \cap U_2 \in eO(X))(V_1 \cap V_2 \in eC(X))(A \cap B=(U_1 \cap U_2)\cap (V_1 \cap V_2)) \\ (U:=U_1\cap U_2)(V:=V_1\cap V_2)\end{array}\right\}\Rightarrow$
    \\
    $\begin{array}{l}
    \Rightarrow (U\in eO(X))(V \in eC(X))(A \cap B=U\cap V)
    \end{array}$
    \\
    $\begin{array}{l}
    \Rightarrow A\cap B \in eLC(X).
    \end{array}$
    \end{proof}

\begin{theorem}\label{t3}
Let $A$ be a subset of a space $X.$ If $A$ is an $a$-locally closed in $X$, then there exists an $a$-closed set $F$ in $X$ such that $A\cap F= \emptyset$.
\end{theorem}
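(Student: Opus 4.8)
The plan is to unfold the definition of $a$-local closedness and extract the disjoint $a$-closed set directly from the decomposition it supplies. Since $A\in aLC(X)$, there exist $U\in aO(X)$ and $V\in aC(X)$ with $A=U\cap V$. The guiding idea is that the $a$-open factor $U$ carries, through its complement, a canonical $a$-closed set that misses $A$: intuitively $A$ lives inside $U$, so the part of $X$ lying outside $U$ cannot meet $A$.

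Concretely, I would set $F:=X\setminus U$. First I would verify that $F\in aC(X)$; this is immediate from the definition of $a$-closed set, since $F$ is the complement of the $a$-open set $U$. Then I would check the disjointness by the one-line computation
\[
A\cap F=(U\cap V)\cap(X\setminus U)=V\cap\bigl(U\cap(X\setminus U)\bigr)=V\cap\emptyset=\emptyset,
\]
which exhibits an $a$-closed set $F$ with $A\cap F=\emptyset$, as required.

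The main point to be careful about is not a computational obstacle—the intersection identity above is elementary—but rather making genuine use of the hypothesis. The choice $F=\emptyset$ would satisfy the conclusion trivially, since $\emptyset\in aC(X)$ by Corollary \ref{aopen1}, so the substance lies in selecting an informative witness. Taking $F=X\setminus U$ does exactly this: it is dictated by the $a$-locally closed structure of $A$ and generally produces a nonempty $F$. If one wishes to strengthen the statement, the same strategy refines to show that $a\text{-}cl(A)\setminus A$ is itself $a$-closed. Indeed, since $A\subseteq V$ and $V\in aC(X)$ we have $a\text{-}cl(A)\subseteq V$, whence
\[
a\text{-}cl(A)\setminus A=a\text{-}cl(A)\cap\bigl((X\setminus U)\cup(X\setminus V)\bigr)=a\text{-}cl(A)\cap(X\setminus U),
\]
and this is an intersection of two $a$-closed sets, hence $a$-closed by Corollary \ref{aopen1}.
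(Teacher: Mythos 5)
Your proof is correct and follows essentially the same approach as the paper: both unfold the decomposition $A=U\cap V$ and take as witness the complement of the $a$-open factor, the only cosmetic difference being that the paper sets $F:=V\setminus U$ while you take the larger set $F:=X\setminus U$, both of which are $a$-closed and disjoint from $A$ for the same reason. Your closing observation that $a\text{-}cl(A)\setminus A$ is $a$-closed is also sound and in fact anticipates part (c) of Theorem \ref{elck}.
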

	
\begin{proof}
Let $A \in aLC(X).$
\\
$\left.\begin{array}{rr}
 A \in aLC(X) \Rightarrow (\exists U \in aO(X))(\exists V \in aC(X))(A=U \cap V) \\ F:= V\setminus U \end{array}\right\}\Rightarrow $
\\
$\begin{array}{l}
\Rightarrow (F \in aC(X))(A \cap F= \emptyset).
\end{array}$
\end{proof}

\begin{remark}
As seen in the example below, 
the converse of the conditional statement given in Theorem \ref{t3} need not always to be true.
 \end{remark}	

\begin{example}
Let $X=\{a,b,c,d\},$ $\tau=\{\emptyset,X,\{a\},\{b\},\{a,b\},\{a,c,d\}\}$ and $A=\{a\}.$ Simple calculations show that
$aO(X) = aC(X) = aLC(X) = \{\emptyset,X,\{b\},\\ \{a,c,d\}\}.$ Then, it is clear that the set $F = \{b\}\in aC(X)$ and $A\cap F=\{a\}\cap \{b\}=\emptyset$ but $A$ is not an $a$-locally closed in $X.$  
\end{example}
 
\begin{theorem}
Let $P$ and $Q$ be two subsets of a space $X$ such that $P\in aO(X)$ and $Q\in aC(X).$ Then, there exist an $a$-open set $E$ and an $a$-closed $F$ such that $P \cap Q \subseteq F$ and $E \subseteq P \cup Q.$ 
\end{theorem}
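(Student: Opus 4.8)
The plan is to exploit the fact that the two requested inclusions are completely decoupled: the condition $P \cap Q \subseteq F$ places a demand only on the $a$-closed set $F$, whereas $E \subseteq P \cup Q$ places a demand only on the $a$-open set $E$. Consequently I expect to produce both witnesses directly from the hypotheses, with no genuine construction of new sets required.

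For the $a$-closed witness I would simply set $F := Q$. By hypothesis $Q \in aC(X)$, and since trivially $P \cap Q \subseteq Q$, this already yields $P \cap Q \subseteq F$ with $F$ $a$-closed. Dually, for the $a$-open witness I would set $E := P$. Again by hypothesis $P \in aO(X)$, and since $P \subseteq P \cup Q$, we obtain $E \subseteq P \cup Q$ with $E$ $a$-open. Each of these is a one-line verification, so in the formulaic style of the preceding proofs the whole argument reduces to two short implication chains.

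I do not anticipate any real obstacle here; if anything, the only pitfall is the temptation to over-engineer the statement. Should one prefer a pair that genuinely mixes both $P$ and $Q$, one can instead take $E := P \cap (X \setminus Q)$, which lies in $aO(X)$ because $X \setminus Q \in aO(X)$ and by Lemma \ref{aopen}$(b)$, together with $F := Q \cup (X \setminus P)$, which lies in $aC(X)$ because $X \setminus P \in aC(X)$ and by Corollary \ref{aopen1}$(b)$; these still satisfy $E \subseteq P \subseteq P \cup Q$ and $P \cap Q \subseteq Q \subseteq F$. Such refinements are not needed for the conclusion as stated, however, and I would present the minimal choice $E := P$, $F := Q$.
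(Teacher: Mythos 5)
Your proof is correct, but it takes a different (and simpler) route than the paper. The paper constructs genuinely mixed witnesses, setting $E:=P\cup a\text{-}int(Q)$ and $F:=Q\cap a\text{-}cl(P)$: here $E\in aO(X)$ as a union of $a$-open sets, $F\in aC(X)$ as an intersection of $a$-closed sets, $P\cap Q\subseteq Q\cap a\text{-}cl(P)=F$ since $P\subseteq a\text{-}cl(P)$, and $E\subseteq P\cup Q$ since $a\text{-}int(Q)\subseteq Q$. Your minimal choice $E:=P$, $F:=Q$ verifies the two required inclusions just as well, and your observation that the two conditions are completely decoupled is accurate: as literally stated, the theorem is satisfied by the hypotheses themselves, so no construction is needed. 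What the paper's choice buys is witnesses with extra structure --- namely $P\subseteq E\subseteq P\cup Q$ and $P\cap Q\subseteq F\subseteq Q$ simultaneously --- which suggests the authors had a stronger sandwich-type statement in mind; but none of that is demanded by the theorem as written, so your argument is a complete proof of the stated result.
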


\begin{proof}
Let $P\in aO(X)$ and $Q\in aC(X).$
\\
     $\left.\begin{array}{rr}  
 (P \in aO(X))(Q \in aC(X))\\ (E:=P \cup a\text{-}int(Q))(F:=Q\cap a\text{-}cl(P))\end{array}\right\}\Rightarrow$
 \\
 $\begin{array}{l}
 \Rightarrow (E \in aO(X))(F \in aC(X))(P \cap Q \subseteq F)(E\subseteq P \cup Q).
 \end{array}$
\end{proof}

	     \begin{theorem}
	        Let $A$ and $B$ be two subsets of a space $X.$ If $A,B \in aLC(X)$, then $A \cap B \in aLC(X)$.  
	     \end{theorem}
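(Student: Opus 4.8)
The plan is to reduce the claim to the two intersection-stability results already recorded for the classes $aO(X)$ and $aC(X)$, namely Lemma \ref{aopen} and Corollary \ref{aopen1}. Indeed, membership in $aLC(X)$ only asserts that a set factors as an $a$-open set intersected with an $a$-closed set, so the whole argument amounts to showing that two such factorizations can be multiplied coordinatewise.

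First I would unpack the hypotheses: since $A, B \in aLC(X)$, there exist $U_1, U_2 \in aO(X)$ and $V_1, V_2 \in aC(X)$ with $A = U_1 \cap V_1$ and $B = U_2 \cap V_2$. Next I would regroup the fourfold intersection by commutativity and associativity of $\cap$, writing $A \cap B = (U_1 \cap U_2) \cap (V_1 \cap V_2)$, so that the two $a$-open factors sit together and the two $a$-closed factors sit together. Finally, I would invoke Lemma \ref{aopen}$(b)$ to obtain $U_1 \cap U_2 \in aO(X)$ and Corollary \ref{aopen1}$(b)$ to obtain $V_1 \cap V_2 \in aC(X)$; exhibiting $A \cap B$ as the intersection of this $a$-open set with this $a$-closed set then yields $A \cap B \in aLC(X)$ directly from the definition.

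There is essentially no obstacle here, since the nontrivial content — that finite intersections of $a$-open sets are $a$-open and finite intersections of $a$-closed sets are $a$-closed — has been packaged into Lemma \ref{aopen} and Corollary \ref{aopen1}. The only point requiring any care is the purely set-theoretic regrouping, which is valid without any topological input. I note, moreover, that this statement coincides with the earlier theorem asserting that $aLC(X)$ is closed under intersection, so the same two-line argument applies verbatim.
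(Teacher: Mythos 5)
Your argument is correct and is essentially identical to the paper's own proof: both decompose $A=U_1\cap V_1$, $B=U_2\cap V_2$, regroup to $A\cap B=(U_1\cap U_2)\cap(V_1\cap V_2)$, and appeal to the closure of $aO(X)$ and $aC(X)$ under finite intersection. The only slip is the citation of Corollary \ref{aopen1}$(b)$ (which concerns unions of $a$-closed sets) where part $(c)$ is the one giving $V_1\cap V_2\in aC(X)$; this is a labelling issue, not a gap, and your observation that the statement duplicates an earlier theorem of the paper is accurate.
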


\begin{proof}
Let $A,B\in aLC(X).$\\
$\left.\begin{array}{l}
A\in aLC(X)\Rightarrow (\exists U_1\in aO(X))(\exists V_1\in aC(X))(A=U_1\cap V_1) \\ B\in aLC(X)\Rightarrow (\exists U_2\in aO(X))(\exists V_2\in aC(X))(B=U_2\cap V_2)
\end{array}\right\}\overset{\text{Lemma } \ref{aopen} }{\Rightarrow}$
\\
$\begin{array}{l}
\Rightarrow (U_1\cap U_2\in aO(X))(V_1\cap V_2\in aC(X))(A\cap B=(U_1\cap V_1)\cap (U_2\cap V_2))   
\end{array}$
\\
$\begin{array}{l}
\Rightarrow (U_1\cap U_2\in aO(X))(V_1\cap V_2\in aC(X))(A\cap B=(U_1\cap U_2)\cap (V_1\cap V_2))   \end{array}$
\\
$\begin{array}{l}
\Rightarrow A\cap B\in aLC(X).  \end{array}$
\end{proof}

\begin{definition}
 A space $X$ is called an $a$-space if $\tau=\tau^a,$ where $\tau^a=aO(X).$ 
\end{definition}

\begin{example} 
Let $X=\{a,b,c,d\}$ and $\tau=\{\emptyset,X,\{a\},\{b,c,d\}\}.$ Simple calculations show that $\begin{array}{l} \tau^a = \{\emptyset,X,\{a\},\{b,c,d\}\}.\end{array}$ This means that $(X,\tau)$ is an $a$-space since $\tau=\tau^a.$ 
\end{example}

	     \begin{theorem}\label{elck}
      Let $A$ be a subset of a space $X.$ Then, the following statements are equivalent:\\
	     $a)$ $A$ is $a$-locally closed;\\
	     $b)$ $A=P\cap a\text{-}cl(A)$ for some $a$-open set $P$;\\
	     $c)$ $a\text{-}cl(A)\setminus A$ is $a$-closed;\\
	    $d)$ $A\cup (X\setminus a\text{-}cl(A))$ is $a$-open; \\
	     $e)$ $A\subseteq a\text{-}int(A\cup (X\setminus a\text{-}cl(A)))$.
	     \end{theorem}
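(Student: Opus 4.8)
The plan is to establish the five equivalences as a single cyclic chain $(a)\Rightarrow(b)\Rightarrow(c)\Rightarrow(d)\Rightarrow(e)\Rightarrow(a)$, so that each arrow reduces to a short set-theoretic computation resting only on the lattice properties of $aO(X)$ and $aC(X)$ recorded in Lemma \ref{aopen} and Corollary \ref{aopen1}, together with the two elementary facts $A\subseteq a\text{-}cl(A)$ and $a\text{-}cl(A)\in aC(X)$. No genuinely new idea is needed beyond choosing the right witness set at each stage.

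For $(a)\Rightarrow(b)$ I would write $A=U\cap V$ with $U\in aO(X)$ and $V\in aC(X)$. Since $A\subseteq V$ and $V$ is $a$-closed, minimality of the $a$-closure gives $a\text{-}cl(A)\subseteq V$; then $U\cap a\text{-}cl(A)\subseteq U\cap V=A\subseteq U\cap a\text{-}cl(A)$, the last inclusion because $A\subseteq U$ and $A\subseteq a\text{-}cl(A)$. Hence $A=U\cap a\text{-}cl(A)$, and one takes $P:=U$. For $(b)\Rightarrow(c)$ I would substitute $A=P\cap a\text{-}cl(A)$ into $a\text{-}cl(A)\setminus A=a\text{-}cl(A)\cap(X\setminus A)$ and distribute, obtaining $a\text{-}cl(A)\cap(X\setminus P)$, which is an intersection of two $a$-closed sets and hence $a$-closed by Corollary \ref{aopen1}.

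The arrow $(c)\Rightarrow(d)$ is purely formal: the complement $X\setminus(a\text{-}cl(A)\setminus A)$ equals exactly $A\cup(X\setminus a\text{-}cl(A))$, so the first set is $a$-closed precisely when the second is $a$-open. For $(d)\Rightarrow(e)$ one only observes that $A\subseteq A\cup(X\setminus a\text{-}cl(A))$ and that, by $(d)$, this latter set equals its own $a$-interior.

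The most substantive step is $(e)\Rightarrow(a)$, though it too collapses to a distributivity computation. Setting $P:=a\text{-}int(A\cup(X\setminus a\text{-}cl(A)))$, which is $a$-open by definition, hypothesis $(e)$ gives $A\subseteq P$, hence $A\subseteq P\cap a\text{-}cl(A)$; conversely, from $P\subseteq A\cup(X\setminus a\text{-}cl(A))$ we get $P\cap a\text{-}cl(A)\subseteq(A\cap a\text{-}cl(A))\cup((X\setminus a\text{-}cl(A))\cap a\text{-}cl(A))=A$. Thus $A=P\cap a\text{-}cl(A)$ displays $A$ as the intersection of an $a$-open set and the $a$-closed set $a\text{-}cl(A)$, so $A\in aLC(X)$. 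The only points demanding any care are the two identities $A\cap a\text{-}cl(A)=A$ and $(X\setminus a\text{-}cl(A))\cap a\text{-}cl(A)=\emptyset$, and keeping complements straight in $(c)\Leftrightarrow(d)$; once these are fixed, every implication is immediate, so I anticipate the bookkeeping rather than any conceptual difficulty to be the sole obstacle.
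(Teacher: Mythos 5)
Your proposal is correct and follows essentially the same route as the paper: the same cyclic chain $(a)\Rightarrow(b)\Rightarrow(c)\Rightarrow(d)\Rightarrow(e)\Rightarrow(a)$ with the same witness sets at each stage (taking $P$ from the decomposition in $(a)\Rightarrow(b)$, the complement computation in $(b)\Rightarrow(c)\Rightarrow(d)$, and $U:=a\text{-}int(A\cup(X\setminus a\text{-}cl(A)))$, $V:=a\text{-}cl(A)$ in $(e)\Rightarrow(a)$). No discrepancies to report.
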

	     
	     \begin{proof}
	        $(a) \Rightarrow (b):$  Let $A \in aLC(X).$ 
         \\
	        $\begin{array}{l}A \in aLC(X) \Rightarrow (\exists P \in aO(X))(\exists Q \in aC(X))(A=P\cap Q)\end{array}$
         \\
	        $\begin{array}{l}\Rightarrow (\exists P \in aO(X))(\exists Q \in aC(X))(A \subseteq Q)(A=P\cap Q)\end{array}$
         \\
	        $\begin{array}{l}\Rightarrow(\exists P \in aO(X))(a\text{-}cl(A) \subseteq a\text{-}cl(Q)=Q)(A=P\cap Q)\end{array}$
         \\
	        $\begin{array}{l}\Rightarrow(\exists P \in aO(X))(A=A\cap a\text{-}cl(A)=(P\cap Q)\cap a\text{-}cl(A)=P\cap a\text{-}cl(A) \subseteq P \cap Q =A)\end{array}$
         \\
	        $\begin{array}{l}\Rightarrow(\exists P \in aO(X))(A=P \cap a\text{-}cl(A)).
         \end{array}$
\\

 $(b) \Rightarrow (c):$ Let $A\subseteq X.$
	        \\
	        $\left.\begin{array}{rr}
	        A\subseteq X \\ \text{Hypothesis}        \end{array}\right\}\Rightarrow (\exists P\in aO(X))(a\text{-}cl(A)\setminus A=a\text{-}cl(A)\setminus (P \cap a\text{-}cl(A)))$
	        \\
	        $\begin{array}{l}
	        \Rightarrow (X\setminus P\in aC(X))(a\text{-}cl(A)\setminus A=a\text{-}cl(A)\setminus P=a\text{-}cl(A) \cap (X\setminus P))
	        \end{array}$
	        \\
	        $\begin{array}{l}
	        \Rightarrow a\text{-}cl(A)\setminus A\in aC(X).
	        \end{array}$
\\

$(c) \Rightarrow (d):$ Let $A\subseteq X.$
	        \\	
	       $\left.\begin{array}{rr} A\subseteq X\Rightarrow A \cup (X\setminus a\text{-}cl(A))= X\setminus (a\text{-}cl(A)\setminus A) \\ \text{Hypothesis} \end{array}\right\}\Rightarrow A \cup (X\setminus a\text{-}cl(A))\in aO(X).$
\\

$(d) \Rightarrow (e):$ Let $A\subseteq X.$
	        \\
	        $\left.\begin{array}{rr}
	        A \subseteq X \\ \text{Hypothesis}        \end{array}\right\}\Rightarrow A \cup (X\setminus a\text{-}cl(A))\in aO(X)$
	        \\
	        $\left.\begin{array}{rr}
	        \Rightarrow A \cup (X\setminus a\text{-}cl(A))=a\text{-}int(A \cup (X\setminus a\text{-}cl(A))) \\ A\subseteq X\Rightarrow A\subseteq A \cup (X\setminus a\text{-}cl(A))      \end{array}\right\}\Rightarrow A\subseteq a\text{-}int(A \cup (X\setminus a\text{-}cl(A))).$
	       \\
	       
$(e) \Rightarrow (a):$
	       Let $A\subseteq X.$ \\
    $\left.\begin{array}{rr} A\subseteq X \\ \text{Hypothesis}\end{array}\right\}\Rightarrow A \subseteq a \text{-}int(A \cup (X \setminus a \text{-}cl(A)))$
\\
$\begin{array}{rcl} \Rightarrow A=A\cap a\text{-}cl(A) & \subseteq & a \text{-}int(A \cup (X \setminus a \text{-}cl(A)))\cap a\text{-}cl(A) \\ & \subseteq & [A \cup (X \setminus a \text{-}cl(A))]\cap a\text{-}cl(A) \\ & = & [A\cap a \text{-}cl(A)] \cup [(X \setminus a \text{-}cl(A))\cap a\text{-}cl(A)] \\ & = & A \cup \emptyset \\ & = & A  \end{array}$
\\
$\left.\begin{array}{rr} \Rightarrow A =a\text{-}int(A \cup (X \setminus a\text{-}cl(A)))  \cap a\text{-}cl(A) \\ (U:=a\text{-}int(A \cup (X \setminus a\text{-}cl(A))))(V:=a\text{-}cl(A))\end{array}\right\}\Rightarrow $
        \\
        $\begin{array}{l}\Rightarrow (U\in aO(X))(V\in aC(X))(A=U\cap V)\end{array}$
        \\
        $\begin{array}{l}\Rightarrow A\in aLC(X).\end{array}$
	     \end{proof}
	     
     \begin{corollary}\label{alo}
      Let $A$ be a subset of a space $X.$ Then, the following statements are equivalent:\\
	     $a)$ $A$ is $a$-locally open;\\
	     $b)$ $A=Q\cup a\text{-}int(A)$ for some $a$-closed set $Q$;\\
	     $c)$ $(\setminus A) \cup a\text{-}int(A)$ is $a$-closed;\\
	    $d)$ $A\cap (X\setminus a\text{-}int(A))$ is $a$-closed; \\
	     $e)$ $A\supseteq a\text{-}cl(A\cap (X\setminus a\text{-}int(A)))$.
	     \end{corollary}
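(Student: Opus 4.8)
The plan is to obtain Corollary \ref{alo} directly from Theorem \ref{elck} by complementation, exploiting that $a$-local openness is, by definition, the complementary notion to $a$-local closedness. First I would put $B:=X\setminus A$ and record the two duality identities $a\text{-}cl(X\setminus A)=X\setminus a\text{-}int(A)$ and $a\text{-}int(X\setminus A)=X\setminus a\text{-}cl(A)$. These follow from the fact that $aO(X)$ is closed under arbitrary unions and $aC(X)$ under arbitrary intersections (Lemma \ref{aopen} and Corollary \ref{aopen1}), so the operators $a\text{-}cl$ and $a\text{-}int$ satisfy the same De Morgan laws relative to $X\setminus(\cdot)$ as do ordinary closure and interior. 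With these identities the whole corollary becomes a line-by-line complement of the five conditions in Theorem \ref{elck} applied to $B$.

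Concretely, $A\in aLO(X)$ means exactly $B\in aLC(X)$, which is condition $(a)$ of Theorem \ref{elck} for $B$, so $(a)\Leftrightarrow(a)$ is immediate. For $(b)$, the theorem's condition $B=P\cap a\text{-}cl(B)$ with $P\in aO(X)$ complements, using $a\text{-}cl(B)=X\setminus a\text{-}int(A)$, to $A=(X\setminus P)\cup a\text{-}int(A)$; setting $Q:=X\setminus P\in aC(X)$ yields $(b)$. For the middle conditions I would compute $a\text{-}cl(B)\setminus B=(X\setminus a\text{-}int(A))\cap A=A\cap(X\setminus a\text{-}int(A))$ and $B\cup(X\setminus a\text{-}cl(B))=(X\setminus A)\cup a\text{-}int(A)=X\setminus\bigl(A\cap(X\setminus a\text{-}int(A))\bigr)$, which turn the theorem's $(c)$ and $(d)$ into the asserted statements about $A\cap(X\setminus a\text{-}int(A))$. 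Finally, complementing the theorem's $(e)$, namely $B\subseteq a\text{-}int(B\cup(X\setminus a\text{-}cl(B)))$, and applying the duality identities gives $A\supseteq a\text{-}cl(A\cap(X\setminus a\text{-}int(A)))$, which is $(e)$.

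There is no genuine obstacle here; the argument is a routine dualization and could even be phrased as ``apply Theorem \ref{elck} to $X\setminus A$.'' The only point demanding care is the bookkeeping: one must apply the De Morgan identities consistently and, crucially, remember that complementation interchanges $a$-open and $a$-closed. This is exactly where a slip is easy to make, since the complement of the $a$-open set arising in the theorem's $(d)$ is an $a$-closed set, so the ``open''/``closed'' labels in the dualized middle conditions must be assigned with this swap in mind rather than copied verbatim from Theorem \ref{elck}.
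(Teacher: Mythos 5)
Your proposal is correct and is exactly the intended argument: the paper states this as a corollary of Theorem \ref{elck} with no separate proof, and the route is to apply that theorem to $X\setminus A$ together with the duality identities $a\text{-}cl(X\setminus A)=X\setminus a\text{-}int(A)$ and $a\text{-}int(X\setminus A)=X\setminus a\text{-}cl(A)$, which you justify from Lemma \ref{aopen} and Corollary \ref{aopen1}. One remark your computation makes visible: dualizing the theorem's $(d)$ yields that $(X\setminus A)\cup a\text{-}int(A)$ is $a$-\emph{open} (equivalently, its complement $A\cap(X\setminus a\text{-}int(A))$ is $a$-closed, which is item $(d)$ of the corollary), so item $(c)$ as printed --- ``$(X\setminus A)\cup a\text{-}int(A)$ is $a$-closed'' --- carries a typo that your closing caveat about the open/closed swap correctly anticipates; your proof establishes the corrected statement.
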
	     
    
	     \begin{theorem}
       If $W \subseteq H \subseteq X$ and $H \in aLC(X)$, then there exists an $a$-locally closed set K such that $W \subseteq K \subseteq H$.
	     \end{theorem}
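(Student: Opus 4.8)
The plan is to exhibit an explicit $a$-locally closed set lying between $W$ and $H$ rather than to argue abstractly. The most economical choice is simply $K := H$: by hypothesis $H \in aLC(X)$, and since $W \subseteq H \subseteq X$ we have at once $W \subseteq K \subseteq H$ with $K \in aLC(X)$. Thus the existence claim follows immediately, using nothing beyond the assumption $H \in aLC(X)$. I would present this as the core of the argument, since the statement imposes no further demand on $K$ (such as minimality, properness, or a prescribed relation to $a\text{-}cl(W)$).

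If instead one wants a witness that genuinely reflects $W$ and not merely $H$, I would take $K := H \cap a\text{-}cl(W)$. Here $a\text{-}cl(W)$ is $a$-closed by the definition of the $a$-closure, hence $a$-locally closed via the inclusion $aC(X) \subseteq aLC(X)$ proved above; and since $aLC(X)$ is closed under finite intersections (the intersection theorem for $a$-locally closed sets), $K$ is again $a$-locally closed. The two inclusions are then routine: $W \subseteq K$ because $W \subseteq H$ and $W \subseteq a\text{-}cl(W)$, while $K = H \cap a\text{-}cl(W) \subseteq H$ is immediate.

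The only step needing care in the refined version is confirming that $K$ is $a$-locally closed, which must be deduced from closure of $aLC(X)$ under intersection rather than reproved by hand. For the literal statement, however, there is essentially no obstacle: the choice $K := H$ settles it outright, and the sole judgement call is whether the intended witness is this trivial one or the tighter canonical set $K := H \cap a\text{-}cl(W)$. I would write the proof with $K := H$ and, optionally, remark that $K := H \cap a\text{-}cl(W)$ furnishes the sharper witness.
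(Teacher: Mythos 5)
Your proposal is correct, and your observation that $K := H$ already settles the statement as literally written is accurate: the theorem asks only for some $K \in aLC(X)$ with $W \subseteq K \subseteq H$, and the hypothesis $H \in aLC(X)$ hands this to you for free. The paper's own proof takes precisely the non-trivial route you sketch as the refinement: it invokes Theorem \ref{elck} to write $H = P \cap a\text{-}cl(H)$ for some $P \in aO(X)$, observes that $W \subseteq H \subseteq P$ gives $W \subseteq P \cap a\text{-}cl(W) \subseteq P \cap a\text{-}cl(H) = H$, and sets $K := P \cap a\text{-}cl(W)$, which is $a$-locally closed directly from the definition as an intersection of an $a$-open and an $a$-closed set. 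Your alternative witness $K := H \cap a\text{-}cl(W)$ is in fact the same set, since $H \cap a\text{-}cl(W) = P \cap a\text{-}cl(H) \cap a\text{-}cl(W) = P \cap a\text{-}cl(W)$ using $a\text{-}cl(W) \subseteq a\text{-}cl(H)$; and your justification of its membership in $aLC(X)$ via $aC(X) \subseteq aLC(X)$ together with closure of $aLC(X)$ under finite intersections is equally valid, resting only on results already established in the paper. What the canonical witness buys over $K := H$ is the extra containment $K \subseteq a\text{-}cl(W)$, which ties $K$ to $W$ rather than to $H$ and is presumably the intended content of the theorem; as stated, the result would be sharper (and no longer trivially true) if the conclusion read $W \subseteq K \subseteq H \cap a\text{-}cl(W)$.
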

	     
	     \begin{proof}
	    Let $H \in aLC(X).$
	    \\
       $\left.\begin{array}{rr}
       H \in aLC(X) \overset{\text{Theorem \ref{elck}}}\Rightarrow (\exists P \in aO(X))(H= P \cap a\text{-}cl(H)) \\ W \subseteq H \end{array}\right\}\Rightarrow$
       \\
       $\begin{array}{l}\Rightarrow (W \subseteq P)(P \in aO(X))(P \cap a\text{-}cl(W) \subseteq P \cap a\text{-}cl(H)=H)
       \end{array}$
       \\
	   $\left.\begin{array}{rr} \Rightarrow (P \in aO(X))(W \subseteq P \cap a\text{-}cl(W) \subseteq P \cap a\text{-}cl(H)=H) \\ K:=P\cap a\text{-}cl(W) \end{array}\right\}\Rightarrow$
	   \\ 
	   $\begin{array}{l}
	   \Rightarrow (K \in aLC(X))(W \subseteq K \subseteq H).
	   \end{array}$
	   \end{proof}
	     
	     \begin{definition}
	     A subset $A$ of a space $X$ is called $a$-dense if $a$-$cl(A)=X$. A space $X$ is called $a$-submaximal if every $a$-dense subset of $X$ is $a$-open in $X.$
	     \end{definition}

	     \begin{theorem}
       A topological space $X$ is $a$-submaximal iff $aLC(X)=2^X.$
	     \end{theorem}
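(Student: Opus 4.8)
The plan is to deduce the statement directly from the characterizations of $a$-local closedness established in Theorem \ref{elck}, so that essentially no new computation is required. The only genuinely new input is the observation that, for an arbitrary subset $A\subseteq X$, the set $A\cup (X\setminus a\text{-}cl(A))$ is always $a$-dense; this single fact is exactly what links the submaximality hypothesis to condition $(d)$ of Theorem \ref{elck}, and the whole proof is organized around it.

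For the forward implication, I would assume $X$ is $a$-submaximal and fix an arbitrary $A\subseteq X$. First I would verify that $A\cup (X\setminus a\text{-}cl(A))$ is $a$-dense, i.e. that its $a$-closure is $X$. Using the monotonicity of the operator $a\text{-}cl(\cdot)$ (which follows from Corollary \ref{aopen1}, since $a\text{-}cl(B)$ is the smallest $a$-closed superset of $B$) together with the fact that every set is contained in its own $a$-closure, one obtains
\[
a\text{-}cl\big(A\cup (X\setminus a\text{-}cl(A))\big)\supseteq a\text{-}cl(A)\cup (X\setminus a\text{-}cl(A))=X.
\]
By $a$-submaximality this $a$-dense set must be $a$-open, and that is precisely condition $(d)$ of Theorem \ref{elck}; the implication $(d)\Rightarrow (a)$ then gives $A\in aLC(X)$. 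Since $A$ was arbitrary, $aLC(X)=2^X$.

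For the converse, I would assume $aLC(X)=2^X$ and take any $a$-dense subset $A$, so that $a\text{-}cl(A)=X$. Because $A\in aLC(X)$, the implication $(a)\Rightarrow (b)$ of Theorem \ref{elck} furnishes an $a$-open set $P$ with $A=P\cap a\text{-}cl(A)=P\cap X=P$, so $A$ is $a$-open. Hence every $a$-dense subset of $X$ is $a$-open, which is exactly the assertion that $X$ is $a$-submaximal.

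I expect the only point requiring care to be the $a$-density claim in the forward direction: it rests on the extensivity and monotonicity of the $a$-closure operator, which are not stated explicitly in the excerpt but are immediate consequences of Corollary \ref{aopen1}. Once that claim is in place, both directions are short, each being a one-line appeal to the equivalences already proved in Theorem \ref{elck}.
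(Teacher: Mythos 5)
Your proposal is correct and follows essentially the same route as the paper: both directions hinge on observing that $A\cup (X\setminus a\text{-}cl(A))$ is $a$-dense and then invoking the equivalences of Theorem \ref{elck}. The only cosmetic difference is that in the converse you use characterization $(b)$ (writing $A=P\cap a\text{-}cl(A)=P$) where the paper uses $(d)$ (noting $A=A\cup(X\setminus a\text{-}cl(A))$ when $a\text{-}cl(A)=X$); these are interchangeable one-line appeals to the same theorem.
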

	     
	     \begin{proof}
	     $(\Rightarrow):$ 
       Let $(X,\tau)$ be an $a$-submaximal space. Then, it is obvious that $aLC(X) \subseteq 2^X\ldots (1)$\\

Now, let $A \in 2^X.$
	      \\ 
       $\left.\begin{array}{rr} A \in 2^X \\ B:= A \cup (X\setminus a\text{-}cl(A)) \end{array}\right\}\Rightarrow$
       \\ $\begin{array}{l}\Rightarrow X\supseteq a\text{-}cl(B)=a\text{-}cl(A \cup (X\setminus a\text{-}cl(A))) \supseteq a\text{-}cl(A) \cup a\text{-}cl(X\setminus a\text{-}cl(A))=X\end{array}$
	      \\
	      $\left.\begin{array}{rr}
	      \Rightarrow a\text{-}cl(B)=X \\
	      (X,\tau) \text{ is } a\text{-submaximal}
	      \end{array}\right\}\Rightarrow B=A \cup (X\setminus  a\text{-}cl(A)) \in aO(X)$

       $\begin{array}{l}\overset{\text{Theorem \ref{elck}}(d)}\Rightarrow A \in aLC(X)\end{array}$

      Therefore, $2^X\subseteq aLC(X)\ldots (2)$

       $(1),(2)\Rightarrow aLC(X)=2^X.$  
\\

$(\Leftarrow):$ Let $a\text{-}cl(A)=X$. Our aim is to show that $A\in aO(X).$
      \\
$\left.\begin{array}{rr} a\text{-}cl(A)=X \Rightarrow A=A\cup (X\setminus a\text{-}cl(A))  \\ aLC(X)=2^X \end{array}\right\}\overset{\text{Theorem }\ref{elck}(d)}{\Rightarrow} A \in aO(X)$. 
	     \end{proof}

\begin{definition}
A space $(X,\tau)$ is called an $e$-space if $\tau=\tau^e,$ where $\tau^e=eO(X).$    
\end{definition}
    
\begin{remark} If $(X ,\tau)$ is regular and $e$-space, then the notions submaximal, $a$-submaximal, $e$-submaximal coincides with one another.
\end{remark}

\begin{proof}
$(a) \Rightarrow (b):$ Let $A \subseteq X$ and $a\text{-}cl(A)= X$.
\\
$\left.\begin{array}{r} A\subseteq X \\ (X,\tau)\text{ is regular} \end{array} \right\}\Rightarrow \begin{array}{rr} \\ \!\!\!\!\! \left. \begin{array}{rr} \alpha\text{-}cl(A)=a\text{-}cl(A)\subseteq cl(A)\\ 
a \text{-}cl(A)=X \end{array} \right\} \Rightarrow cl(A)=X\end{array}$
\\
$\left. 
\begin{array}{r} 
\Rightarrow cl(A)=X \\  (X,\tau) \text{ is  submaximal}
\end{array}\right\}\Rightarrow  \begin{array}{rr}  \\  \!\!\!\!\!\!\!\!\!\!\!\!\!\!\!\!
\left. \begin{array} {rr}  A\in \tau \\ (X,\tau) \text{ is regular} \end{array} \right\} \Rightarrow A \in \delta O(X) \subseteq aO(X)
\end{array}$
\\
$\begin{array}{l}
\Rightarrow A\in aO(X).
\end{array}$
\\

$(b) \Rightarrow (c):$ Let $A \subseteq X$ and $e \text{-}cl(A)=X$.
\\
$\left.\begin{array}{r} A\subseteq X \Rightarrow e \text{-}cl(A) \subseteq a \text{-}cl(A) \\ e \text{-}cl(A)=X \end{array} \right\}\Rightarrow \begin{array}{rr} \\ \!\!\!\!\!\!\!\!\!\!\!\!\!\!\!\! \left. \begin{array}{rr} a\text{-}cl(A)= X\\ 
(X,\tau) \text{ is }  a\text{-submaximal} \end{array} \right\} \Rightarrow A \in aO(X) \subseteq eO(X) \end{array}$
\\
$\begin{array}{l}
\Rightarrow A\in eO(X).
\end{array}$
\\

$(c) \Rightarrow (a):$ Let $A \subseteq X$ and $cl(A)=X$.\\
$\left. 
\begin{array}{r} 
A \subseteq X \\ 
(X,\tau) \text{ is } e\text{-space}
\end{array}\right\} \Rightarrow \!\!\!\!\!\!  \begin{array}{c} 
\mbox{} \\ 
 \left. 
\begin{array}{r} 
e \text{-}cl(A)=cl(A)
\\ 
cl(A)=X
\end{array} 
\right\} \Rightarrow e \text{-}cl(A)=X 
\end{array}$
\\
$\left. 
\begin{array}{r} 
\Rightarrow e\text{-}cl(A) =X\\ 
(X,\tau) \text{ is } e\text{-submaximal}
\end{array}\right\} \Rightarrow \!\!\!\!\!\!\!\!\!\!\!\!\!\! \begin{array}{c} 
\mbox{} \\ 
 \left. 
\begin{array}{r} 
A \in eO(X)
\\ 
(X,\tau) \text{ is } e\text{-space}
\end{array} 
\right\} \Rightarrow A \in \tau. \hspace{4,3cm} \qedhere 
\end{array}$    
\end{proof}	     
	       
\begin{definition}
	        Let $A$ and $B$ be two subsets of a space $X.$ Then, $A$ and $B$ are said to be $a$-separated if $A \cap a\text{-}cl(B) = \emptyset$ and $B \cap a\text{-}cl(A) = \emptyset$.
\end{definition}

\begin{theorem}
Let $A$ and $B$ be two $a$-locally closed sets in a space $X$. If $A$ and $B$ are $a$-separated, then $A \cup B \in aLC(X)$.
\end{theorem}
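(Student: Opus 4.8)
The plan is to apply characterization $(b)$ of Theorem \ref{elck}, which says that a set lies in $aLC(X)$ exactly when it can be written as $P \cap a\text{-}cl(\cdot)$ for some $a$-open $P$. So first I would use the hypotheses $A,B \in aLC(X)$ to write $A = U_1 \cap a\text{-}cl(A)$ and $B = U_2 \cap a\text{-}cl(B)$ for suitable $a$-open sets $U_1$ and $U_2$. The goal is then to produce $A \cup B$ in the same shape, namely as $U \cap a\text{-}cl(A\cup B)$ for a single $a$-open set $U$, after which the definition of $aLC(X)$ finishes the proof.

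The candidate I would propose is
\[
U := \big(U_1 \cap (X\setminus a\text{-}cl(B))\big) \cup \big(U_2 \cap (X\setminus a\text{-}cl(A))\big).
\]
This set is $a$-open by Lemma \ref{aopen}: each complement $X \setminus a\text{-}cl(A)$ and $X \setminus a\text{-}cl(B)$ is $a$-open because $a\text{-}cl(\cdot)$ is $a$-closed, each bracketed term is then a finite intersection of $a$-open sets, and the union of the two is again $a$-open. For the closed factor I would take $V := a\text{-}cl(A\cup B)$, which is $a$-closed and, because $aO(X)$ is a topology (Lemma \ref{aopen}), satisfies the additivity relation $a\text{-}cl(A\cup B) = a\text{-}cl(A) \cup a\text{-}cl(B)$.

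The core of the argument is verifying $U \cap V = A \cup B$. Distributing $U \cap \big(a\text{-}cl(A)\cup a\text{-}cl(B)\big)$ yields four terms; two of them — the intersection of $U_1 \cap (X\setminus a\text{-}cl(B))$ with $a\text{-}cl(B)$, and of $U_2 \cap (X\setminus a\text{-}cl(A))$ with $a\text{-}cl(A)$ — vanish at once, since a set meets the complement of its own $a$-closure in the empty set. The two surviving terms simplify, via $A = U_1 \cap a\text{-}cl(A)$ and $B = U_2 \cap a\text{-}cl(B)$, to $A \setminus a\text{-}cl(B)$ and $B \setminus a\text{-}cl(A)$ respectively. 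This is where the $a$-separation hypothesis is used decisively: $A \cap a\text{-}cl(B) = \emptyset$ gives $A \setminus a\text{-}cl(B) = A$, while $B \cap a\text{-}cl(A) = \emptyset$ gives $B \setminus a\text{-}cl(A) = B$, so the surviving terms are precisely $A$ and $B$ and their union is $A \cup B$. With $U \in aO(X)$, $V \in aC(X)$, and $A \cup B = U \cap V$, we conclude $A \cup B \in aLC(X)$.

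I expect the main obstacle to be the bookkeeping in this distribution step — keeping track of which cross terms vanish and confirming that both separation conditions are invoked in exactly the right places. The additivity $a\text{-}cl(A\cup B) = a\text{-}cl(A)\cup a\text{-}cl(B)$, though routine once one recalls that $aO(X)$ is a genuine topology, is the single structural ingredient I would want to state explicitly rather than leave implicit.
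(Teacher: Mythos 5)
Your proposal is correct and follows essentially the same route as the paper: both invoke Theorem \ref{elck}$(b)$ to write $A=U_1\cap a\text{-}cl(A)$ and $B=U_2\cap a\text{-}cl(B)$, form the same $a$-open set $\bigl(U_1\cap(X\setminus a\text{-}cl(B))\bigr)\cup\bigl(U_2\cap(X\setminus a\text{-}cl(A))\bigr)$, intersect it with $a\text{-}cl(A\cup B)=a\text{-}cl(A)\cup a\text{-}cl(B)$, and let the separation hypothesis kill the cross terms. Your explicit remark that the additivity of $a\text{-}cl$ rests on $aO(X)$ being a topology (Lemma \ref{aopen}) is a welcome clarification that the paper leaves implicit.
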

	      
       \begin{proof}
       Let $A$ and $B$ be $a$-separated and $A$ and $B$ two $a$-locally closed sets.
	      $\left.\begin{array}{rr} A \in aLC(X) \Rightarrow (\exists P \in aO(X))(A=P\cap a\text{-}cl(A)) \\ B \in aLC(X) \Rightarrow (\exists Q \in aO(X))(B= Q \cap a\text{-}cl(B)) \\ (U:= P \cap (X\setminus a\text{-}cl(B)))(V:= Q \cap (X \setminus a\text{-}cl(A))) \end{array}\right\}\Rightarrow$
	      \\
 $\begin{array}{l}
	      \Rightarrow (U,V \in aO(X))(U \cap a\text{-}cl(A)=A)(V \cap a\text{-}cl(B)=B)
	      \end{array}$
	      \\
	      $\begin{array}{ll}((U \cup V) \cap a\text{-}cl(A \cup B) = (U \cup V) \cap a\text{-}cl(A) \cup a\text{-}cl(B))
	      \\
	          \Rightarrow  (U,V \in aO(X))(U \cap a\text{-}cl(A)=A)(V \cap a\text{-}cl(B)=B) \\ (U \cup V) \cap a\text{-}cl(A \cup B)= (U \cap a\text{-}cl(A)) \cup (U \cap a\text{-}cl(B)) \cup (V \cap a\text{-}cl(A)) \cup (V \cap a\text{-}cl(B))\end{array}$
\\
 $\begin{array}{ll}\Rightarrow  (U,V \in aO(X))(U \cap a\text{-}cl(A)=A)(V \cap a\text{-}cl(B)=B)
\\
(U \cup V) \cap a\text{-}cl(A \cup B)= (\underset{A}{\underbrace{U \cap a\text{-}cl(A)}}) \cup (\underset{\emptyset}{\underbrace{U \cap a\text{-}cl(B)}}) \cup (\underset{\emptyset}{\underbrace{V \cap a\text{-}cl(A)}}) \cup (\underset{B}{\underbrace{V \cap a\text{-}cl(B)}})\end{array}$
       \\  
        $\begin{array}{l}\Rightarrow  (U\cup V \in aO(X))(a\text{-}cl(A\cup B)\in aC(X))((U \cup V) \cap a\text{-}cl(A \cup B)=A\cup B)\end{array}$
       \\
	      $\begin{array}{l}
	       \Rightarrow A \cup B \in aLC(X).
	      \end{array}$
	      \end{proof}

	      \begin{lemma}\label{bilge}
        Let $X$ and $Y$ be two topological spaces and $A\subseteq X$ and $B\subseteq Y.$\\
	      $a)$ If $A\in aO(X)$ and $B \in aO(Y),$ then $A \times B \in aO(X \times Y),$
	      \\
	      $b)$ If $A\in aC(X)$ and $B \in aC(Y),$ then $A\times B\in aC(X\times Y).$
	      \end{lemma}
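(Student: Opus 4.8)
The plan is to prove (a) by a direct computation with the defining inclusion of $a$-openness, and then to deduce (b) from (a) together with Lemma \ref{aopen}, rather than applying the complement trick naively (the complement of a product is not a product, so the reduction used for the closed cases in Lemma \ref{lemma2}$(b)$ and Lemma \ref{aek}$(b)$ does not transfer).

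For (a), I would first record the two elementary product identities $int_{X\times Y}(E\times F)=int_X(E)\times int_Y(F)$ and $cl_{X\times Y}(E\times F)=cl_X(E)\times cl_Y(F)$, valid for all $E\subseteq X$ and $F\subseteq Y$. The one genuinely new ingredient needed is the inclusion $\delta\text{-}int_X(A)\times\delta\text{-}int_Y(B)\subseteq\delta\text{-}int_{X\times Y}(A\times B)$. To obtain it, I would observe that a product of regular open sets is regular open: if $R=int_X(cl_X(R))$ and $S=int_Y(cl_Y(S))$, then the two identities give $int_{X\times Y}(cl_{X\times Y}(R\times S))=int_X(cl_X(R))\times int_Y(cl_Y(S))=R\times S$. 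Writing $\delta\text{-}int_X(A)$ and $\delta\text{-}int_Y(B)$ as unions of regular open subsets of $A$ and $B$ respectively, their product is a union of regular open rectangles, each contained in $A\times B$, and hence contained in $\delta\text{-}int_{X\times Y}(A\times B)$.

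With these tools the computation for (a) is a short chain of inclusions. Starting from $A\subseteq int_X(cl_X(\delta\text{-}int_X(A)))$ and $B\subseteq int_Y(cl_Y(\delta\text{-}int_Y(B)))$, I take the product, push the product inside $int$ and $cl$ using the two identities, and finally enlarge $\delta\text{-}int_X(A)\times\delta\text{-}int_Y(B)$ to $\delta\text{-}int_{X\times Y}(A\times B)$ via the inclusion above together with monotonicity of $cl$ and $int$. This yields $A\times B\subseteq int_{X\times Y}(cl_{X\times Y}(\delta\text{-}int_{X\times Y}(A\times B)))$, that is, $A\times B\in aO(X\times Y)$.

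For (b), rather than showing the complement is $a$-open in one piece, I would decompose $(X\times Y)\setminus(A\times B)=((X\setminus A)\times Y)\cup(X\times(Y\setminus B))$. Since $A\in aC(X)$ gives $X\setminus A\in aO(X)$, and $Y\in aO(Y)$ by Lemma \ref{aopen}$(a)$, part (a) yields $(X\setminus A)\times Y\in aO(X\times Y)$; symmetrically $X\times(Y\setminus B)\in aO(X\times Y)$. By Lemma \ref{aopen}$(c)$ their union is $a$-open, so its complement $A\times B$ is $a$-closed. The main obstacle of the whole argument is the $\delta$-interior inclusion used in (a); once the regular-open-product fact is in hand, everything downstream is bookkeeping with the two product identities and the union-stability of $aO(X\times Y)$.
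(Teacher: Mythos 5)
Your proof is correct. Part (a) follows essentially the same route as the paper: take the product of the defining inclusions and push the product through $int$ and $cl$; the only difference is that you explicitly justify the key step $\delta\text{-}int_X(A)\times\delta\text{-}int_Y(B)\subseteq\delta\text{-}int_{X\times Y}(A\times B)$ via the fact that a product of regular open sets is regular open, whereas the paper simply asserts the corresponding identity $\delta\text{-}int(A)\times\delta\text{-}int(B)=\delta\text{-}int(A\times B)$ as one link in its chain of equalities without proof --- so your version is the more carefully argued of the two. For part (b) you genuinely diverge: the paper does not use the complement at all, but instead runs the dual computation directly from the characterization $A\supseteq cl(int(\delta\text{-}cl(A)))$ of $a$-closedness, obtaining $A\times B\supseteq cl(int(\delta\text{-}cl(A\times B)))$ by the mirror-image chain (which silently relies on the dual identity $\delta\text{-}cl(A)\times\delta\text{-}cl(B)=\delta\text{-}cl(A\times B)$). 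Your alternative --- decomposing $(X\times Y)\setminus(A\times B)$ as $((X\setminus A)\times Y)\cup(X\times(Y\setminus B))$, applying part (a) to each rectangle, and invoking the union-stability of $aO(X\times Y)$ from Lemma \ref{aopen} --- is a clean way to sidestep the fact that the complement of a product is not a product, and it has the advantage of needing no new $\delta$-closure identity: everything reduces to part (a) and facts already established. The paper's route is more symmetric but carries the burden of a second unproved product identity; yours is slightly longer to state but fully self-contained given (a).
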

	      
	      \begin{proof}
$a)$ Let $A\in aO(X)$ and $B\in aO(Y)$.\\
	      $
	      \left.\begin{array}{rr}
	      A\in aO(X)\Rightarrow A\subseteq int(cl(\delta\text{-}int(A))) \\ B\in aO(Y)\Rightarrow B\subseteq int(cl(\delta\text{-}int(B))) 
	      \end{array}\right\}\Rightarrow
	      $
	      \\
	      $\begin{array}{rcl}
	      \Rightarrow A\times B & \subseteq & int(cl(\delta\text{-}int(A)))\times int(cl(\delta\text{-}int(B))) \\ & = & int[cl(\delta\text{-}int(A))\times cl(\delta\text{-}int(B))] \\ & = & int(cl[\delta\text{-}int(A)\times \delta\text{-}int(B)])
	      \\ & = & int(cl(\delta\text{-}int(A\times B)))
	      \end{array}$

      Then, we have $A\times B\in aO(X\times Y).$
\\
       
$b)$ Let $A\in aC(X)$ and $B\in aC(Y)$.\\
	      $
	      \left.\begin{array}{rr}
	      A\in aC(X)\Rightarrow A\supseteq cl(int(\delta\text{-}cl(A))) \\ B\in aC(Y)\Rightarrow B\supseteq cl(int(\delta\text{-}cl(B))) 
	      \end{array}\right\}\Rightarrow
	      $
	      \\
	      $\begin{array}{rcl}
	      \Rightarrow A\times B & \supseteq & cl(int(\delta\text{-}cl(A)))\times cl(int(\delta\text{-}cl(B))) \\ & = & cl[int(\delta\text{-}cl(A))\times int(\delta\text{-}cl(B))] \\ & = & cl(int[\delta\text{-}cl(A)\times \delta\text{-}cl(B)])
	      \\ & = & cl(int(\delta\text{-}cl(A\times B)))
	      \end{array}$
      
      Then, we have $A\times B\in aC(X\times Y).$
   \end{proof}
	      
	      \begin{theorem}
	      Let $X$ and $Y$ be two topological spaces and $A\subseteq X$ and $B\subseteq Y$. If $A \in aLC(X)$ and $B \in aLC(Y)$, then $A \times B \in aLC(X \times Y).$
          \end{theorem}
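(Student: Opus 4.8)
The plan is to reduce this product statement to the two product lemmas already established, namely Lemma \ref{bilge}, together with the elementary set-theoretic identity relating a product of intersections to an intersection of products. First I would invoke the hypotheses to decompose each factor: since $A\in aLC(X)$, there exist $U_1\in aO(X)$ and $V_1\in aC(X)$ with $A=U_1\cap V_1$, and since $B\in aLC(Y)$, there exist $U_2\in aO(Y)$ and $V_2\in aC(Y)$ with $B=U_2\cap V_2$. This is exactly the form in which the definition of $a$-local closedness presents the data, so the step is immediate.

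The central step is to rewrite the product as
\[
A\times B=(U_1\cap V_1)\times(U_2\cap V_2)=(U_1\times U_2)\cap(V_1\times V_2).
\]
This identity holds for arbitrary sets and is checked pointwise: a pair $(x,y)$ lies in the left-hand side precisely when $x\in U_1$, $x\in V_1$, $y\in U_2$, and $y\in V_2$, which is exactly the condition for $(x,y)$ to belong to both $U_1\times U_2$ and $V_1\times V_2$, hence to the right-hand side. With this rewriting in hand, I would apply Lemma \ref{bilge}(a) to obtain $U_1\times U_2\in aO(X\times Y)$ and Lemma \ref{bilge}(b) to obtain $V_1\times V_2\in aC(X\times Y)$.

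Putting these facts together, $A\times B$ is exhibited as the intersection of the $a$-open set $U_1\times U_2$ with the $a$-closed set $V_1\times V_2$ in $X\times Y$, which by the very definition of $aLC(X\times Y)$ gives $A\times B\in aLC(X\times Y)$, completing the argument.

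I do not anticipate any genuine obstacle, since all of the substantive topological content has already been absorbed into Lemma \ref{bilge}, whose proofs verify that $\delta$-interior, interior, and closure interact correctly with finite products. The only point that warrants any care is the set identity above, and that is a routine pointwise verification rather than a real difficulty; everything else is bookkeeping with the definition of $a$-local closedness.
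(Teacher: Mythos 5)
Your proposal is correct and follows essentially the same route as the paper's own proof: decompose $A=U_1\cap V_1$ and $B=U_2\cap V_2$, use the identity $(U_1\cap V_1)\times(U_2\cap V_2)=(U_1\times U_2)\cap(V_1\times V_2)$, and apply Lemma \ref{bilge} to conclude. No gaps.
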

	      \begin{proof}
     Let $A\in aLC(X)$ and $B\in aLC(Y).$
 \\       
       $
\left.\begin{array}{rr}
 A\in aLC(X)\Rightarrow (\exists U_1 \in aO(X))(\exists V_1 \in aC(X))(A = U_1 \cap V_1)\\ B\in aLC(X)\Rightarrow (\exists U_2 \in aO(X))(\exists V_2 \in aC(X))(B=U_2 \cap V_2)	      \end{array}\right\}\overset{\text{Lemma } \ref{bilge}}{\Rightarrow}$
	      \\
	$\left.\begin{array}{rr}\Rightarrow (U_1 \times U_2 \in aO(X))(V_2 \times V_2 \in aC(X))(A \times B =(U_1 \cap V_1) \times (U_2 \cap V_2)) \\ (U_1 \cap V_1) \times (U_2 \cap V_2)=(U_1 \times U_2) \cap (V_1 \times V_2) \end{array}\right\}\Rightarrow $  
 \\
$\begin{array}{rr}\Rightarrow (U_1 \times U_2 \in aO(X))(V_2 \times V_2 \in aC(X))(A \times B =(U_1 \times U_2) \cap (V_1 \times V_2))  \end{array}$
 \\
$\begin{array}{l}\Rightarrow A \times B \in aLC(X \times Y).\end{array}$
	      \end{proof}

\section{Conclusion}
In this article, we defined a new type of set, called $a$-locally closed, by utilizing the notion of $a$-open and $a$-closed sets and investigated their fundamental properties. Also, we obtained some characterizations of this new notion. Moreover, we compared the class of sets with the existing ones in the literature. Furthermore, we proved some relationships between this new notion and the other notions that existed in the literature and we also gave several examples. We hope that this paper will stimulate further research on the notion of locally closedness.


\bibliographystyle{amsplain}

\begin{thebibliography}{99}
\bibitem{1}
N. Bourbaki, General topology, Part I. Reading, MA:\emph{ Addison Wesley} (1966).
\bibitem{2}
N. Bourbaki, Elements of mathematics, General Topology, \emph{Part I. Hermann} (1966).
\bibitem{3}
D. Andrijevic, On $b$-open sets, Matematicki Vesnik. 48 (1996), 59-64.
\bibitem{4}
E. Ekici, Some generalizations of almost contra-super-continuity, Filomat. 21 (2) (2007), 31-44.
\bibitem{5}
E. Ekici, A note on $a$-open sets and $e^*$-open sets, Filomat. 22 (2008a), 89-96.
\bibitem{6}
E. Ekici, On $a$-open sets, $A^{*}$-sets and decompositions of continuity and super-continuity, Annales Univ. Sci. Budapest. Eötvös Sect. Math., 51 (2008c), 39-51.
\bibitem{7}
E. Ekici, New forms of contra continuity, Carpathian J. Math., 24(1) (2008b), 37-45.
\bibitem{8}
E. Ekici, On $e$-open sets, $DP^*$-sets and $DP\mathcal{E}^*$-sets and decompositions of continuity, Arab. J. Sci. Eng. 33 (2A) (2008d), 269-281.
\bibitem{9}
M. Ganstercand I.L. Reilly, Locally closed sets and $LC$-continuous functions Internat. J. Math. and Math. Sci., 12(3) (1989), 417-424.
\bibitem{10}
Y. Gnanambal and K. Balachandran, $\beta$-locally closed sets and $\beta$-$LC$-continuous functions, Mem. Fac. Sci, Kochi Univ. (Math.), 19 (1998), 35-44.

\bibitem{101}
B. İzci, On $e$-locally closed and $a$-locally closed sets, MSc Thesis, Muğla Sıtkı Koçman University, (2023).
\bibitem{11}
N. Levine, Semi-open sets and semi-continuity in topological spaces, Amer. Math. Monthly, 70 (1963), 36-41.
\bibitem{12}
A. A. Nasef, On $b$-locally closed sets related topics, Chaos, Solits. Fractals, 12 (2001), 1905-1915.
\bibitem{13}
A. A. Nasef, Feebly locally closed sets and feebly $LC$-continuous functions.
\bibitem{14}
O. Njastad, On some classes of nearly open sets, Pacific J. Math., 15 (1965), 961-970.
\bibitem{15}
M. H. Stone, Applications of the theory of Boolean ring to general topology, Trans. Amer. Math. Soc. 41 (1937), 375-381.
\bibitem{15}
N. V. Velicko, $H$-closed topological spaces, Amer. Math. Soc., 78(2) (1968), 103-118.



\end{thebibliography}

\end{document}